\documentclass[a4paper,11pt]{amsart}
\usepackage{tikz,tikz-cd}
\usetikzlibrary{babel, calc, fadings, intersections}
\usepackage{mathpazo}
\usepackage{enumitem}
\usepackage[dvipsnames]{xcolor}
\usepackage{mathrsfs}  
\usepackage{bbm} 
\usepackage{fancyhdr}
\usepackage{amsmath, amsfonts, amssymb, amsthm, tensor, mathabx}
\usepackage[all]{xy}
\usetikzlibrary{decorations.pathreplacing,calligraphy}
\usepackage[T1]{fontenc}
\usepackage[utf8]{inputenc}
\usepackage{graphicx}
\usepackage[vmargin=3cm, hmargin=3cm]{geometry}
\usepackage[colorlinks = true,
            linkcolor = blue,
            urlcolor  = blue,
            citecolor = red,
            anchorcolor = blue,
pdftitle={},
pdfauthor={},
linktocpage=true
]{hyperref}
\usepackage{cleveref}
\usepackage[scr=rsfs]{mathalpha}

\usepackage[mathcal]{eucal}

\DeclareFontFamily{OT1}{pzc}{}
\DeclareFontShape{OT1}{pzc}{m}{it}{<-> s * [1.10] pzcmi7t}{}
\DeclareMathAlphabet{\mathpzc}{OT1}{pzc}{m}{it}

\usetikzlibrary{decorations.pathreplacing,calc}
\usetikzlibrary{arrows.meta,bending,quotes}
\usetikzlibrary{arrows}

\usepackage{mathtools}
\def\co{\colon\thinspace}

\mathchardef\mhyphen="2D

\numberwithin{equation}{section}

\newcommand{\Bb}{\mathcal{B}}
\newcommand{\C}{\mathbb{C}}
\newcommand{\R}{\mathbb{R}}
\newcommand{\N}{\mathbb{N}}
\newcommand{\Z}{\mathbb{Z}}
\newcommand{\Q}{\mathbb{Q}}
\renewcommand{\P}{\mathbb{P}}
\newcommand{\T}{\mathbb{T}}
\newcommand{\bend}{\mathpzc{B}}
\newcommand{\Vtrop}{\textup{trop}}
\newcommand{\Supp}{\textup{Supp}}
\newcommand{\Strop}{\mathpzc{Trop}}

\newcommand{\Span}{\operatorname{span}}
\newcommand{\im}{\operatorname{Im}}

\newcommand{\trop}{\operatorname{trop}}

\DeclareMathOperator{\Hom}{Hom}
\DeclareMathOperator{\Spec}{Spec}
\DeclareMathOperator{\an}{an}
\DeclareMathOperator{\ev}{\mathpzc{ev}}

\newtheorem{theorem}{Theorem}[subsection]  
\newtheorem{lemma}[theorem]{Lemma} 
\newtheorem{proposition}[theorem]{Proposition}
\newtheorem{corollary}[theorem]{Corollary}
\newtheorem{conjecture}[theorem]{Conjecture}
\newtheorem{question}[theorem]{Question}
\theoremstyle{remark} 
\newtheorem{definition}[theorem]{Definition}
\newtheorem{remark}[theorem]{Remark}
\newtheorem{Exa}[theorem]{Example}

\title[Three lectures on tropical algebra]{Three lectures on tropical algebra}
 
\author[J. Giansiracusa]{Jeffrey Giansiracusa}

\author[K. K\"{u}hn]{Kevin K\"{u}hn}

\author[S. Mereta]{Stefano Mereta}

\author[E. Vital]{Eduardo Vital}

\AtEndDocument{\bigskip{\footnotesize%
\textsc{Jeffrey Giansiracusa, Durham University, UK} \par\vspace{-7pt}  
  \addvspace{\medskipamount}
  \textit{E-mail:} \texttt{jeffrey.giansiracusa@durham.ac.uk} \par
  \addvspace{\medskipamount}
\textsc{Kevin K\"{u}hn, TU Berlin, Germany} \par\vspace{-7pt}  
  \addvspace{\medskipamount}
  \textit{E-mail:} \texttt{kuehn@math.tu-berlin.de} \par
  \addvspace{\medskipamount}
\textsc{Stefano Mereta, CUNEF Universidad, Madrid, Spain} \par\vspace{-7pt}  
  \addvspace{\medskipamount}
  \textit{E-mail:} \texttt{stefano.mereta@cunef.edu} \par
  \addvspace{\medskipamount}
\textsc{Eduardo Vital, Universit\"{a}t Bielefeld, Germany} \par\vspace{-7pt}
  \textit{E-mail:} \texttt{evital@math.uni-bielefeld.de}
}}

\begin{document}
\date{\today \\ 
K. K\"{u}hn received support
by Deutsche Forschungsgemeinschaft (DFG) through ''Symbolic Tools in Mathematics and their
Application'' (TRR $195$, project-ID \texttt{286237555}). S. Mereta was partially supported by the Wallenberg AI, Autonomous Systems and Software Program (WASP) funded by the Knut and Alice Wallenberg Foundation. E. Vital was funded by the Deutsche Forschungsgemeinschaft (DFG, German Research Foundation) – TRR $358$, Project-ID \texttt{491392403}.}

\begin{abstract}
This document is a slightly expanded version of a series of talks given by J. Giansiracusa at the workshop `Geometry over semirings' at Universitat Aut\`{o}noma de Barcelona in July 2025.  In the first lecture  we introduce tropical polynomials, ideals, congruences, and how the connection with tropical geometry is made via congruences of bend relations. Tropical geometry and matroid theory are telling us that we should focus attention on a narrow slice of the world of tropical algebra, and this leads to the theory of tropical ideals (as developed by Maclagan and Rinc\'{o}n) and an abundance of interesting open questions.  In the second lecture we examine the relationship between Berkovich analytification and tropicalization from the perspective of bend relations, giving a refinement of Payne's influential limit theorem.  In the third lecture we set aside geometry and focus on tropicalization via bend relations as a construction in commutative and non-commutative algebra.  Constructions such as symmetric algebras, exterior algebras, matrix algebras, and Clifford algebras can be tropicalized.  In the case of exterior algebras, the resulting tropical notion beautifully completes the picture of the Pl\"{u}cker embedding and gives a new perspective on the tropical Pl\"{u}cker relations. For matrix algebras and Clifford algebras, Morita theory becomes an interesting topic.
\end{abstract}

\maketitle

\section*{Introduction}
    \label{sec:introduction}
This document is a slightly expanded version of a series of talks given by J. Giansiracusa at the workshop `Geometry over semirings' at Universitat Aut\`{o}noma de Barcelona in July 2025.

\subsection*{First lecture}
The tropical semiring $(\R \cup \infty, \mathrm{min}, +)$ is an interesting place to do algebra, and it is intimately connected to tropical geometry.  
In this talk, I will introduce tropical polynomials, ideals, congruences, and how the connection with tropical geometry is made via congruences of bend relations.  
Tropical geometry and matroid theory are telling us that we should focus attention on a narrow slice of the world of tropical algebra.  
This leads to the theory of tropical ideals (as developed by Maclagan and Rinc\'{o}n) and an abundance of interesting open questions.  
I will try to summarize what we know and what we don't yet know about tropical ideals and sketch the outlines of the associated notion of tropical schemes that we are beginning to see.

\subsection*{Second lecture}
In 2008 Payne proved that the Berkovich analytification of an affine variety is homeomorphic to the category-theoretic limit of all of its tropicalizations.  We will explore this phenomenon from the perspective of tropical algebra, bend relations, and universal objects in category theory.  We also present our perspective on the linear version of this story.

Tropicalizing a scheme $X$ requires a choice of an embedding into a toric variety.  The limit of all such embeddings exists as a mild generalisation of a toric embedding, and it can be explicitly described.  The tropicalization determined by this embedding has a universal property: it maps to all other tropicalizations.  The Berkovich analytification also has this property, and the two are in fact homeomorphic.

If $X = \mathrm{Spec} \: A$, then the Berkovich analytification is the space of valuations on $A$.  If one admits valuations taking values in idempotent semirings that are not necessarily totally ordered, then the category of valuations on $A$ has an initial object, and the target of this universal valuation is precisely the algebra corresponding to the universal tropicalization.

\subsection*{Third lecture}
In this talk we will set aside geometry and focus on tropicalization via bend relations as a construction in commutative and non-commutative algebra. 
By starting at the level of tensor algebras, constructions such as symmetric algebras, exterior algebras, matrix algebras, and Clifford algebras can be tropicalized.
In the case of exterior algebras, the resulting tropical notion beautifully completes the picture of the Pl\"{u}cker embedding and gives a new perspective on the tropical Pl\"{u}cker relations.
For matrix algebras and Clifford algebras, Morita theory becomes an interesting topic. I will present some facts and some questions.

\section{First lecture}\label{sec:FirstLecture}

\subsection{Three perspectives on varieties}
Let $K$ be a field and $K[X_1,\dots, X_n]$ the ring of polynomials in $n$ variables. For an element $f\in K[X_1,\dots,X_n]$ we write 
\[
    f(X_1,\dots,X_n)=\sum_{\alpha\in\N^n} c_\alpha X^\alpha 
\]
where $c_\alpha=0$ for all but finitely many $\alpha$ and $X^\alpha\coloneqq X_1^{\alpha_1}\cdots X_n^{\alpha_n}$. Here we assume that the set of natural numbers $\N$ contains zero. Let $I \subseteq K[X_1,\dots,X_n]$ be an ideal, and define the algebraic variety
\[
    V(I)\coloneqq\{ x\in K^n \mid f(x)=0 \text{ for all } f\in I \}.
\]
There are three slightly different ways to think about the set $V(I)$. 
\begin{enumerate}[label=(\arabic*)]
    \item\label{DV1} It is the intersection of the zero sets of all polynomials $f\in I$;
    \item\label{DV2} It the solution set to the system of equations $f=0$ for $f\in I$; 
    \item\label{DV3} It is the set of homomorphisms $\Lambda$
\[
    \begin{tikzcd}
        K[X_1,\dots, X_n] \arrow[rr,"\Lambda"]  \arrow[rd,two heads] & & K \\
        & K[X_1,\dots, X_n]/I  \arrow[ru,dashed,swap,"\lambda"] &
    \end{tikzcd}
\]
that factor through the surjection $K[X_1,\dots, X_n] \twoheadrightarrow K[X_1,\dots, X_n]/I$.
\end{enumerate}
Over a field, these three perspectives are clearly mathematically equivalent.  However, when we transition to the world of tropical algebra, we will see that they each offer something slightly different. In perspective \ref{DV1}, we need to use an appropriate notion of zero set, and this is provided by the usual tropical vanishing condition of the min being attained at least twice.  For perspective \ref{DV2}, we argue that consistency with \ref{DV1} requires that the simple equation $f=0$ be replaced with a system of equations that we call the \emph{bend relations}.  In the tropical world, ideals are no longer sufficient to specify quotients, but the system of equations in \ref{DV2} is, and so the quotient appearing in perspective \ref{DV3} should be replaced by the quotient by the bend relations; but then the quotient algebra contains more information that just its set of solutions in the tropical semiring. Shifting focus from the solutions to the quotient leads to the world of tropical schemes as in \cite{Giansiracusa2X_2016, Maclagan_Rincon_2020}.

\subsection{The tropical semiring} 

A semiring $S$ is a ring where an element may not have an additive inverse. More precisely, a \emph{semiring} is a triple $(S,+,\cdot )$, where $S$ is a nonempty set, and $+, \cdot: S\times S \to S$ are operations called \emph{addition} and \emph{multiplication}, respectively. We write $r+s$ for $+(r,s)$ and $rs$ for $\cdot(r,s)$. The addition and multiplication satisfy the following four axioms:
\begin{enumerate}[label=(SR\arabic*)]
    \item\label{axiom: SR1} The pair $(S,+)$ is a commutative monoid with identity $0$;
    \item\label{axiom: SR2} The pair $(S,\cdot)$ is a monoid with identity $1\neq 0$;
    \item\label{axiom: SR3} For all $r,s,t \in S$
    \[
        r(s+t) = rs + rt \quad \text{ and } \quad (r+s)t = rt + st;
    \]
    \item\label{axiom: SR4} For all $s\in S$, one has $0s=s0=0$.
\end{enumerate}
As for rings, given a semiring $(S,+,\cdot)$, to be short with notation, we write $S$ to represent the semiring $(S,+,\cdot)$. 

\begin{Exa}\label{Exa: semirings} As first basic examples we point out that:
    \begin{itemize}
    \item Any ring $(R, +, \cdot)$ is a semiring where every element has an additive inverse; 
    \item The set of natural numbers $\N$ with usual addition and multiplication is an example of a semiring which is not a ring; 
    \item The set $\{0, \infty\}$ with addition given by the minimum, and multiplication by the classical addition is a semiring. 
    \end{itemize}
\end{Exa}

\begin{Exa}[Tropical semiring]\label{Exa: tropical semiring}
    Let $\T\coloneqq\R\cup\{\infty\}$. The \emph{tropical semiring} is the triple $(\T,\oplus,\odot)$, where 
    \[
        \begin{tikzcd}[row sep=0pt,/tikz/column 1/.append style={anchor=base east}
     ,/tikz/column 2/.append style={anchor=base west}]
            \oplus:\T\times\T \arrow[r] & \T \\
            (a,b) \arrow[r, mapsto] & \min\{a,b\}
        \end{tikzcd}
        \quad \text{ and } \quad
        \begin{tikzcd}[row sep=0pt, /tikz/column 1/.append style={anchor=base east}
     ,/tikz/column 2/.append style={anchor=base west}]
            \odot:\T\times\T \arrow[r] & \T \\
            (a,b) \arrow[r, mapsto] & a+b,
        \end{tikzcd}
    \]
    with these operations extended to the element $\infty$ in the intuitively expected way by the rules $a\oplus \infty = a$ (so $\infty$ is
    the identity element for the commutative monoid $(\T, \oplus)$) and $a \odot \infty = \infty $; these operations are
    called \emph{tropical addition} and \emph{tropical multiplication}, respectively.  Note that, for each $a\in \T$, one has $0\odot a = a\odot 0 = a$, so $0$ is the identity element for the {tropical multiplication} $\odot$.  It is not hard to see that this structure satisfies the above axioms: axioms \ref{axiom: SR1}, \ref{axiom: SR2}, and \ref{axiom: SR4} are clear.
    We next verify the distributivity axiom \ref{axiom: SR3}. For any $r,s,t\in\T$, observe that
    \[
        r\odot(s\oplus t) = r+\min\{s,t\} = \min\{r+s,r+t\} = (r\odot s) \oplus (r\odot t),
    \]
    and likewise for distribution of tropical multiplication from the other side.
    Thus the tropical semiring is indeed a semiring.    Furthermore, in $\T$ we have an inverse operation for the tropical multiplication, namely the \emph{tropical division} given by $a\odiv b\coloneqq a - b$ for any $a\in\T$ and $b\in\T^*\coloneqq \R$.
\end{Exa}

A \emph{subsemiring} $R$ of a semiring $(S,+, \cdot)$ is a subset of $S$ such that the restrictions of the addition and multiplication endow the subset $R$ with the structure of a semiring. In other words, a subsemiring of $S$ is a subset $R$ that contains $\{0_S,1_S\}$ such that $R+R=\{r+r'\in S\mid r,r'\in R\}\subseteq R$
and $R\cdot R=\{r\cdot r'\in S\mid r,r'\in R\}\subseteq R$.

Let $S$ and $S'$ be semirings, a \emph{morphism} of semirings $\varphi:S\to S'$ is a map that preserves the identity elements, sums, and multiplications, that is, $\varphi(0_S)=0_{S'}$, $\varphi(1_S)=1_{S'}$, $\varphi(r+s)=\varphi(r)+\varphi(s)$, and $\varphi(rs)=\varphi(r)\varphi(s)$. When the morphism $\varphi$ is a bijection we say it is an \emph{isomorphism} between $S$ and $S'$ and write $S\cong S'$. For more details about semirings, see \cite{Golan1999} and \cite{Golan2003}. 

\begin{Exa}
    Note that $\N$ is a subsemiring of $\Q_{\ge0}$ and $\R_{\ge0}$, both of them with usual addition and multiplication. More generally, $\N$ is the initial object in the category of semirings, which is to say that any semiring $S$ admits a unique homomorphism $\N \to S$; the image of $\N$ in $S$ is the minimal subsemiring of $S$, consisting of the elements $\{0,\:1_{S},\:1_{S}+1_{S}, \:1_{S}+1_{S}+1_{S}, \: \ldots\}$.
\end{Exa}
\begin{Exa}[Models for the tropical semiring]
Let $\T_{\max}=(\R\cup\{-\infty\},\oplus_m,\odot_m)$ where 
$a\oplus_m b\coloneqq\max\{a,b\}$ and $a\odot_m b\coloneqq a\odot b=a+b$. It is not difficult to verify that $\T_{\max}$ with this addition  and multiplication is a semiring with $0_{\T_{\max}}=-\infty$ and $1_{\T_{\max}}=0$. Now, we define a map from the tropical semiring $\T$ to $\T_{\max}$ by 
\[
    \begin{tikzcd}[row sep=0pt,/tikz/column 1/.append style={anchor=base east}
     ,/tikz/column 2/.append style={anchor=base west}]
            \varphi:\T\arrow[r] & \T_{\max} \\
            t \arrow[r, mapsto] & -t \\
            \infty \arrow[r, mapsto] & -\infty.
        \end{tikzcd}
\]
This is a bijection with inverse given by $t \mapsto -t$. Moreover, $\varphi(0)=0$ and $\varphi(a\odot b)=-a-b=\varphi(a)\odot_m\varphi(b)$. Finally, 
\begin{align*}
    \varphi(a\oplus b) & = \varphi(\min\{a,b\}) = -\min\{a,b\} =\max\{-a,-b\} = \max\{\varphi(a),\varphi(b)\} \\
    & = \varphi(a)\oplus_m \varphi(b).
\end{align*}
Thus we have an isomorphism of semirings $\T \cong \T_{\max}$.

There is a third model for the tropical semifield. Namely, consider the triple $\T_e=(\R_{\ge0},\oplus_m,\cdot)$ where $a \oplus_m b\coloneqq\max\{a,b\}$ and $a \odot b\coloneqq ab$ (usual multiplication). It is not difficult to verify that $\T_e$ is a semifield with neutral elements $0_{\T_e}=0$ and $1_{\T_e}=1$ for addition and multiplication, respectively. Moreover, the map $\exp \co \T\to\T_{e}$ given by $t\mapsto \exp(-t)$ and $\infty \mapsto 0$ gives an isomorphism $\T\cong \T_e$.
\end{Exa}

\begin{Exa}[$\T$ as a limit of semirings, {\cite[$\S$~1.2]{Shaw_2015}}]\label{Exa: T as a limit of semirings}
    Here we fix the $\T_{\max}$ model for the tropical semiring. Let $\ell>1$ be a real number, and define the semiring $\T_\ell\coloneqq(\R\cup\{-\infty\}, \oplus_\ell,\odot_\ell)$, where
    \[
        \begin{tikzcd}[row sep=0pt,/tikz/column 1/.append style={anchor=base east},/tikz/column 2/.append style={anchor=base west}]
            \oplus_\ell:\T_\ell\times\T_\ell \arrow[r] & \T_\ell \\
            (a,b) \arrow[r, mapsto] & \log_\ell(\ell^a+\ell^b)
        \end{tikzcd}
        \quad \text{ and } \quad
        \begin{tikzcd}[row sep=0pt, /tikz/column 1/.append style={anchor=base east},/tikz/column 2/.append style={anchor=base west}]
            \odot_\ell:\T_\ell\times\T_\ell \arrow[r] & \T_\ell \\
            (a,b) \arrow[r, mapsto] & \log_\ell(\ell^a \ell^b).
        \end{tikzcd}
    \]
    For each $\ell>1$ the identity map gives a bijection of sets between $\T$ and $\T_{\ell}$; this is \emph{not} a semiring isomorphism; 
    the identity map preserves $0_{\T_\ell}=-\infty$, $1_{\T_\ell}=0$, and multiplication, but it does not preserve addition.  However, it is approximately an isomorphism in the following quantitative sense: applying  $\log_\ell$ in the inequalities $\ell^a\oplus_m \ell^b \le \ell^a + \ell^b \le 2\odot(\ell^a\oplus_m\ell^b)$, one obtains 
    \[
        a\oplus_m b \le a \oplus_\ell b \le a\oplus_m b + \log_\ell(2)
    \]
     for each $\ell > 1$. In this sense the tropical semiring $\T_{\max}$ can be seen as the limit $\ell\to\infty$ of the semirings $\T_\ell$ 
     (see \autoref{fig:family_Trop_semiring}).
    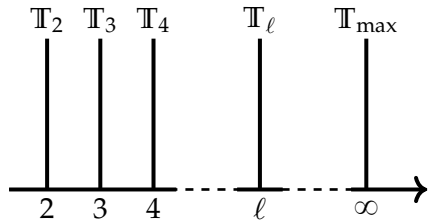
\begin{figure}[ht]
    \centering
    \begin{tikzpicture}   
        \draw[line width = 1.5pt, -] (0,0) -- (0,2) node[yshift=7pt, xshift=0pt] {$\T_2$};
        \node[yshift=-7pt] at (0,0) {$2$};
        \draw[line width = 1.5pt, -] (0.7,0) -- (0.7,2) node[yshift=7pt, xshift=0pt] {$\T_3$};
        \node[yshift=-7pt] at (0.7,0) {$3$};
        \draw[line width = 1.5pt, -, name path = 4] (1.4,0) -- (1.4,2) node[yshift=7pt, xshift=0pt] {$\T_4$};
        \node[yshift=-7pt] at (1.4,0) {$4$};
        \draw[line width = 1.5pt, -] (2.8,0) -- (2.8,2) node[yshift=7pt, xshift=0pt] {$\T_\ell$};
        \node[yshift=-7pt] at (2.8,0) {$\ell$};
        \draw[line width = 1.5pt, -, name path = infty] (4.2,0) -- (4.2,2) node[yshift=7pt, xshift=0pt] {$\T_{\max}$};
        \node[yshift=-7pt] at (4.2,0) {$\infty$};
        \draw[line width = 1.5pt, -, name path=ell] (-.5,0) -- (1.7,0) node[xshift=6pt] {};
        \draw[line width = 1.5pt, -, name path=ell] (2.5,0) -- (3.1,0) node[xshift=6pt] {};
        \draw[dashed, line width = 1pt,name path=deg_d] (-.5,0) -- (5,0) node[xshift=-23pt] {};
        \draw[line width = 1.5pt, ->, name path=ell] (4,0) -- (5,0) node[xshift=6pt] {};
        \draw[dashed, line width = 1pt,name path=deg_d] (4,0) -- (5,0) node[xshift=-23pt] {};
    \end{tikzpicture}
    \caption{Family of semirings converging to $\T$.}
    \label{fig:family_Trop_semiring}
\end{figure}
The idea of considering this limit of semirings is due to Litvinov and Maslov. This approach is known as \emph{Maslov dequantization}; see, for instance, \cite{Viro_2000}.
\end{Exa}
Let $S$ and  $S'$ be two semirings. Their product $S\times S'$, with componentwise addition and multiplication, is naturally a semiring with neutral elements for addition and multiplication $(0_{S}, 0_{S'})$ and $(1_S, 1_{S'})$, respectively.

\subsection{Quotients of semirings}

A \emph{congruence} $\sim$ on a semiring $S$ is an equivalence relation on $S$ such that the semiring operations of $S$ descend to well-defined operations on the quotient $S/\hspace{-3pt}\sim$. It is straightforward to show that an equivalence relation is a congruence if and only if it satisfies the following two conditions (for any $a,b,r,s \in S$):
\begin{enumerate}[label=(CS\arabic*)]
    \item\label{axiom: SRC1} If $a\sim b$ and $r\sim s$, then $a+r \sim b+s$;
    \item\label{axiom: SRC2} If $a\sim b$ and $r\sim s$, then $ar \sim bs$. 
\end{enumerate}
In other words, $\sim$ is a congruence on $S$ if and only if 
\[
    \{ (r,s) \in S\times S \mid r \sim s \}
\]
is a subsemiring of $S\times S$. A congruence $\sim$ on $S$ is \emph{proper} if there exist $a,b\in S$ with $a\nsim b$, that is, if $\{ (r,s) \in S\times S \mid r \sim s \}$ is a proper subsemiring of $S\times S$. As for many algebraic structures, the quotient $S/\hspace{-3pt}\sim$ by a proper congruence is naturally endowed with the structure of a semiring. Namely, let $[s]$ be the class of $s\in S$ in the quotient $S/\hspace{-3pt}\sim$. The addition and multiplication in $S/\hspace{-3pt}\sim$ are defined by
\[
     [r]+[s]\coloneqq[r+s] \quad \text{ and } \quad [r][s]\coloneqq[rs],
\]
respectively. When the congruence is not proper, then the quotient is a single point and we have violated the part of semiring axiom \ref{axiom: SR2} requiring $0\neq 1$.

Given a morphism $\varphi: S\to S'$ of semirings, we define its \emph{kernel congruence} by 
\[
    \ker(\varphi)\coloneqq \{ (r,s) \in S\times S \mid \varphi(r)=\varphi(s)\}.
\]
The image of $\varphi$, defined as $\im(\varphi)\coloneqq\{\varphi(s)\mid s\in S\}$, is a subsemiring of $S'$. Replacing ideals by congruences, analogues of the classical isomorphism theorems for rings hold for semirings; see \cite[Prop. 2.4.4]{Giansiracusa2X_2016} for details.

The \emph{congruence generated} by a set of pairs $\{(r_i, s_i)\}_{i\in \Lambda}$, denoted by $\langle r_i \sim s_i \rangle_{i\in \Lambda}$, is the smallest congruence such that $r_i\sim s_i$ for all $i\in \Lambda$. 
It can be described as the intersection of all congruences containing the relations $r_i \sim s_i$; this uses the easy-to-check fact that an intersection of congruences is again a congruence.
Alternatively, the congruence generated by a set of pairs can be constructed by first taking the subsemiring of $S\times S$ generated by the pairs $(r_i, s_i)$, and then taking the symmetrisation and transitive closure by adding $(a,c)$ whenever $(a,b)$ and $(b,c)$ are present.

One might wonder why we need to talk about congruences when we all learn about quotients in algebra in terms of ideals.  The answer is that ideals and congruences are entirely equivalent descriptions of quotients of rings, but for semirings there are many more congruences than ideals, and so there are quotients that are not specified by ideals.  We now examine this in more detail.

An ideal $I \subset S$ determines a congruence generated by the relations $\{f\sim 0\}_{f\in I}$.  This gives a map 
\[
\{\text{Ideals in $S$}\} \stackrel{\mathcal{I}}{\longrightarrow} \{\text{Congruences on $S$}\}.
\]
Going in the opposite direction, a congruence $\mathcal{C}$ determines an ideal $\{f \:\: | f\sim_{\mathcal{C}} 0\}$, and so there is a map
\[
\{\text{Congruences on $S$}\} \stackrel{\mathcal{K}}{\longrightarrow} \{\text{Ideals in $S$}\}.
\]
When $S$ is a ring, any relation $r\sim s$ implies and is implied by the relation $r-s \sim 0$ since we can subtract or add $s$ on both sides.  Hence the above pairs of maps are bijections inverse to one another. In contrast, on a semiring where we do not have additive inverses, the correspondence above can fail. The composition $\mathcal{K} \circ \mathcal{I}$ is always the identity (so $\mathcal{I}$ is injective and $\mathcal{K}$ is surjective), but the reverse composition need not be the identity.

\begin{Exa}
On $\T[x,y]$, consider the congruence generated by the relation $x\sim y$. The quotient map $\T[x,y] \to \T[x,y]/(x\sim y)$ does not identify any elements with $\infty = 0_\T$, Hence $\mathcal{K}$ sends both this congruence and the trivial congruence to the trivial ideal $(0_\T)$.
\end{Exa}

For further details concerning congruences on semirings, we refer the reader to \cite{Giansiracusa2X_2016}, \cite{Giansiracusa2x2018}, and \cite{Golan2003}.

\subsection{Tropical polynomials}
Let $\T[X_1,\dots,X_n]$ be the semiring of tropical polynomials in $n$ variables, and $f\in \T[X_1,\dots,X_n]$. We write 
\[
    f(X_1,\dots,X_n)=\bigoplus_{\alpha\in\N^n} c_\alpha \odot X^\alpha
\]
where $c_\alpha = \infty$ for all but finitely many $\alpha\in\N^n$, and $X^\alpha$ denotes the tropical monomial $X_1^{\alpha_1}\odot\cdots\odot X_n^{\alpha_n}$.  
Note that, in analogy with the classical case, where $1X^\alpha$ is written only as $X^\alpha$, in the tropical notation $X^\alpha=0\odot X^\alpha$.  A polynomial $f$ is an algebraic expression, and it represents a function $\T^n \to \T$ given by sending 
\[
(x_1, \ldots, x_n) \mapsto f(x_1, \ldots, x_n) = \min_{\alpha\in\N^n} \left \{ c_\alpha + \langle  \alpha, x\rangle \right \}
\]
i.e., we are evaluating $f$ at the point $(x_1, \ldots, x_n)$ using the arithmetic operations of $\T$.

Analogously to the classical case, the \emph{degree} of the tropical polynomial  $f$  is defined as $\deg(f)\coloneqq\max\{\alpha_1+\cdots+\alpha_n \mid c_\alpha\neq\infty\}$. We say that $f$ is \emph{homogeneous} (of degree $d$) if each monomial $c_\alpha\odot X^\alpha$ of $f$ with $c_\alpha\neq\infty$ has degree $d$. 
For any pair $f$ and $g$ of tropical polynomials, we clearly have $\deg(f\odot g) = \deg(f)\odot\deg(g)$. Thus, as in the world of rings, the semiring of tropical polynomials is graded by degree, which is to say that $\T[X_1,\dots,X_n]$ can be written as the (direct) sum 
\[
\oplus_{d \in \N} \T[X_1,\dots,X_n]_{d}
\]
where the $d$ piece consists of the set of all homogeneous tropical polynomials of degree $d$. Note that the $\T$-module $\T[X_1,\dots,X_n]_{d}$ is free of rank $\binom{n+d-1}{d}$.

\subsection{The vanishing locus of a tropical polynomial}

We are now ready to revisit the three perspectives on tropical varieties from the beginning of this lecture.

First we consider the tropical world's version of the zero set of a polynomial.   The function 
$\T^n \to \T$ defined by a tropical polynomial attains the value $\infty = 0_\T$ either nowhere or only at the point $(\infty, \ldots, \infty)$ if it has no constant term; moreover, a tropical Laurent polynomial defines a function $(\T\smallsetminus \{0\})^n = \R^n \to \T$ that never attains the value $\infty$.  Thus the preimage of $\infty$ is not the appropriate definition of the variety it determines.

Instead, the widely accepted definition of the variety of a tropical polynomial is given by the set of points satisfying usual min-attained-at-least-twice condition.  The following argument for the appropriateness of this definition is due to Mikhalkin.  Given $f\in \T[X_1^\pm, \dots, X_n^\pm]$, we consider its associated function $f\co \R^n\to \R$. Its graph has the following three properties: 
\begin{enumerate}
\item It is piecewise linear;
\item It has slopes in $\Z$;
\item It is convex downward.
\end{enumerate}
Functions satisfying these three conditions are said to be \emph{(tropically) regular}. 
\autoref{fig:graph_f} provides an illustrative example.
    
    \begin{figure}[ht]
        \centering
        \resizebox{4cm}{3cm}{%
        \begin{tikzpicture} 
            \draw[->] (0,-.5) -- (0,3) node[yshift=5pt] {};
            \draw[->] (-.5,0) -- (5,0) node[xshift=6pt] {};

            \draw[line width = 1pt,name path=f2] (.4,2) -- (3.6,2) node[xshift=5pt] {$2$}; 
            \draw[line width = 1pt,,name path=2x] (-.5,-1) -- (1.5,3) node[yshift=-5pt, xshift=-15pt] {$2X$}; 
            \draw[line width = 1pt,name path=-2x+8] (2.5,3) -- (4.5,-1) node[yshift=110pt, xshift=-30pt] {$8-2X$}; 
            \draw[name path=-2x+8] (2.5,3) -- (4.5,-1) node[red, yshift=73pt, xshift=-74pt] {$f(X)$}; 
            \draw[line width = 1.5pt, red,name path=graph_f] (-.5,-1) -- (1,2) -- (3,2) -- (4.5,-1);
        \end{tikzpicture}
        }
        \caption{The graph of the tropical Laurent polynomial $f(X)= (X^2)\oplus (2)\oplus (8\odot X^{-2})$ is shown in red.  It is the minimum of the linear functions represented by its three monomial terms.}
        \label{fig:graph_f}
    \end{figure}
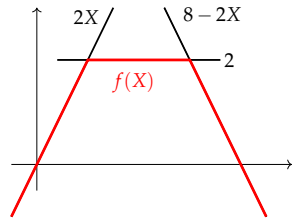

A root of a classical polynomial (or Laurent polynomial) $P$ is a point where $1/P$ is not locally regular in the classical sense. Now consider a tropical Laurent polynomial $f$. 
The tropical reciprocal $f^{-1} = 0 \odiv f$ has graph equal to the graph of $-f$ (i.e., the reflection across the $x$-axis). A \emph{tropical root} is a point in the domain where $f^{-1}$ is not tropically regular on a neighbourhood. Since taking the reciprocal in the tropical sense simply flips the graph upside down, it preserves piecewise linearity and $\Z$-slopes.  However, while $f$ is convex downward, $f^{-1}$ is convex upward.  Locally near a point where $f$ is linear, the graph is both convex upward and downward, but at a point of where the graph is nonlinear, $f^{-1}$ fails to be locally convex downward, and hence it fails to be regular. The graph of $f$ is the minimum of a set of affine linear functions, one for each monomial term in $f$, and these points of non-linearity are the points in the domain where this minimum of affine linear terms is attained by at least two terms. Thus one is led to define 
 \emph{tropical zero locus} or \emph{bend locus} of $f$ as the set of points where the minimum is attained at least twice.  That is,   
\[
    V^{\trop} (f)  \coloneqq \left\{ x\in \R^n\mid\min_{\alpha}\{c_\alpha + \langle \alpha, x\rangle \} \text{ is attained at least twice}\right\}.
\]
See an example in \autoref{fig:breaks_at_the_band_point} for a simple illustration.

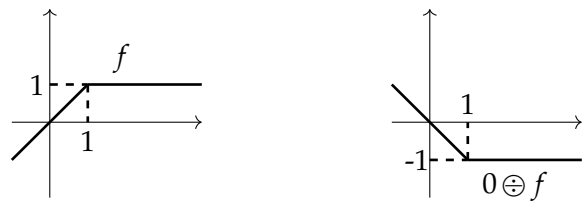
\begin{figure}[ht]
    \centering
    \begin{tikzpicture}
        \def\mx{.5} 
        \pgfmathsetmacro{\my}{.5} 
        \pgfmathsetmacro{\myn}{-\my} 
        \pgfmathsetmacro{\tx}{5} 
        \pgfmathsetmacro{\lx}{-.5} 
        \draw[->] (0,-1) -- (0,1.5) node[] {};
        \draw[->] (-.5,0) -- (2,0) node[] {};

        \draw[line width = 1pt] (\lx,\lx) -- (\mx,\my);
        \draw[line width = 1pt, dashed] (0,\my) -- (\mx,\my) node[xshift=-19pt] {1}; 
        \draw[line width = 1pt] (\mx,\my) -- (2,\my) node[xshift=-30pt, yshift=10pt] {$f$};
        \draw[line width = 1pt, dashed] (\mx,\my) -- (\mx,0) node[yshift=-7pt] {1};
        
        \draw[->] (\tx + 0,-1) -- (\tx + 0,1.5) node[] {};
        \draw[->] (\tx -.5,0) -- (\tx + 2,0) node[] {};

        \draw[line width = 1pt] (\tx + \lx, -2*\myn + \lx ) -- (\tx + \mx, \myn);
        \draw[line width = 1pt, dashed] (\tx + 0,\myn) -- (\tx + \mx,\myn) node[xshift=-19pt] {-1}; 
        \draw[line width = 1pt] (\tx + \mx,\myn) -- (\tx + 2,\myn) node[xshift=-25pt, yshift=-10pt] {$0\odiv f$};
        \draw[line width = 1pt, dashed] (\tx+ \mx,\myn) -- (\tx + \mx,0) node[yshift=7pt] {1};
    \end{tikzpicture}
    \caption{The graphs of $f=1 \oplus x$ and $f^{-1}$.}
    \label{fig:breaks_at_the_band_point}
\end{figure}
 
An illustration of $V^{\trop}$, is shown in the next Example:
\begin{Exa}
    Consider $f(X,Y)= X\oplus Y \oplus 0 \in \T[X^\pm, Y^\pm]$. By definition, $V^\Vtrop(f)$ is the locus where $\min\{X,Y,0\}$ is attained at least twice. Each of these three terms dominates in a region (0 in the upper right quadrant, $Y$ in the lower region, and $X$ in the left region), as shown in \autoref{fig:trop_hhypersurface_1}.  The variety $V^\Vtrop(f)$ is the black rays forming a Y-shaped graph where the regions meet.
    \begin{figure}[ht]
        \centering
        \begin{tikzpicture}
            \node (y0x) [] at ($(2.5,0)$) {$Y=0\leq X$};
            \node (x0y) [] at ($(0,1.7)$) {$X=0\leq Y$};
            \node (xy0) [] at ($(-1.2,-1.3)$) {$Y=X\leq 0$};
            \node[black!80!green!100] (0) [] at ($(.65,.65)$) {$0$};
            \node[blue] (x) [] at ($(-.5,.2)$) {$X$};
            \node[red] (y) [] at ($(.2,-.5)$) {$Y$};
            
            \path[draw, line width=1.5pt] (0,0) -- (-1,-1); 
            \path[draw, line width=1.5pt] (0,0) -- (0,1.4);
            \path[draw, line width=1.5pt] (0,0) -- (1.4,0);

            \fill[color=red!90,opacity=.30] 
            (0,0) -- (-1,-1) -- (.4,-1) -- (1.4,0);
            \fill[color=blue!90,opacity=.30] 
            (0,0) -- (-1,-1) -- (-1,.4) -- (0,1.4);
            \fill[color=green!90,opacity=.30] 
            (0,0) -- (1.4,0) -- (1.4,1.4) -- (0,1.4);
            
        \end{tikzpicture}
        \caption{Ilustration of $V^\Vtrop(f)$.}
        \label{fig:trop_hhypersurface_1}
    \end{figure}
\end{Exa}
The bend locus of a single tropical Laurent polynomial is called a \emph{tropical hypersurface}.

\begin{remark}
A general \emph{tropical variety} is the intersection of a collection of tropical hypersurfaces satisfying some conditions that we will not go into here.  These conditions are satisfied when the collection consists of all tropicalizations of elements in an ideal, and they guarantee that the resulting set satisfies the balancing condition.  
\end{remark}

\subsection{Tropical varieties are solutions to equations}
We are now ready to come to the fundamental question of this lecture. We saw that, in passing from the classical to the tropical world, the definition of a variety as zero locus should be replaced with the definition as tropical vanishing locus.  Our second perspective for classical varieties was as the solution set to equations $f=0$. 
\begin{center}
    \emph{Given a tropical polynomial $f$, what is the system of equations whose solution set is the tropical hypersurface $V^\Vtrop(f)$?}
\end{center}
Note that a system of equations in a semiring $S$ is the same thing as a congruence on $S$.

The answer to the above question is not unique! This should not be surprising to anyone who has studied scheme theory and is familiar with non-reduced structures; distinct schemes over a field $K$ can have the same $K$-valued points.  For instance, the affine schemes $\Spec(K[X]/\langle X \rangle)$ and $\Spec(K[X]/\langle X^2 \rangle)$ both have a single $K$ point but they are most definitely not isomorphic as their structure sheaves are different.

There is a \textbf{maximal choice}.  Given a set $X\subseteq \R^n$, we can consider the set of equations $f = g$ for all pairs of tropical polynomials satisfying $f(x)=g(x)$ for all $x \in X$. When $X$ is a sufficiently nice subset (a large class that includes polyhedral complexes such as tropical varieties), the solution set for this set of equations recovers $X$.  This maximal choice loses information and corresponds to taking the reduced scheme structure, which in the affine case means passing from an ideal to its radical.

There are many other possible choices, but we would like one that depends in some canonical way on the tropical polynomials $f$.  

Let $f=\bigoplus_{\alpha\in\Z^n}c_\alpha\odot X^\alpha$ be a Laurent polynomial  in $\T[X_1^\pm,\dots,X_n^\pm]$. The \emph{support} of $f$, denoted by $\Supp(f)$, is the set of $\alpha$ such that $c_\alpha\neq\infty$. That is, 
\[
    \Supp(f)\coloneqq\{ \alpha \in \Z^n \mid c_\alpha \neq \infty \}.
\]
Given an exponent vector $\beta\in \Z^n$, we define $f_{\widehat{\beta}}$ as the (Laurent) polynomial obtained by deleting the $\beta$-monomial $c_\beta\odot X^\beta$ from $f$.  In symbols, 
\[
    f_{\widehat{\beta}}\coloneqq \bigoplus_{\substack{ \alpha\in\Z^n \\ \alpha\neq\beta }}c_\alpha\odot X^\alpha.
\]
Observe that if $\beta\notin\Supp(f)$, then $f_{\widehat{\beta}} = f$.

\begin{definition} \label{def:bend-congruence}
    Let $f=\bigoplus_{\alpha}c_\alpha\odot X^\alpha$ be a Laurent polynomial.
The \emph{bend relations} of $f$ are the equations
\[
    \left\{ f \sim f_{\widehat{\beta}} \mid \beta \in \Supp(f)\right\},
\]
and the \emph{bend congruence} of $f$ denoted $\Bb(f)$, is the congruence generated by the bend relations of $f$.
\end{definition}

\begin{Exa}
    Let $f(X,Y) = X\oplus Y\oplus 0$. The bend relations of $f$ are
    \[
        \{X\oplus Y\oplus 0 \sim X\oplus Y,\; X\oplus Y\oplus 0 \sim X \oplus 0,\; X\oplus Y\oplus 0 \sim Y \oplus 0\}.
    \]
            

    
\end{Exa}

\begin{proposition}
The solution set to the bend relations of a tropical polynomial (or tropical Laurent polynomial) $f$ is precisely the tropical hypersurface $V^\Vtrop(f)$.
\end{proposition}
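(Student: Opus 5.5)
We interpret "solution set" in perspective \ref{DV3}: a point $x \in \R^n$ is identified with the evaluation homomorphism $\mathrm{ev}_x \co \T[X_1^\pm,\dots,X_n^\pm] \to \T$. It is a solution of a congruence exactly when $\mathrm{ev}_x$ factors through the quotient. Equivalently, $\ker(\mathrm{ev}_x)$ contains the congruence. Since $\ker(\mathrm{ev}_x)$ is itself a congruence, and $\Bb(f)$ is the smallest congruence containing the bend relations, it suffices to check the generators. So the first step reduces the statement to:
\[
x \text{ is a solution} \iff f(x) = f_{\widehat{\beta}}(x) \text{ for all } \beta \in \Supp(f).
\]
This uses only the description of the generated congruence as the intersection of all congruences containing the relations.

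The second step is a pointwise computation. Write $m_\alpha(x) = c_\alpha + \langle \alpha, x\rangle$, which is a real number for $x\in\R^n$ and $\alpha \in \Supp(f)$. Then
\[
f(x) = \min_{\alpha \in \Supp(f)} m_\alpha(x), \qquad f_{\widehat{\beta}}(x) = \min_{\alpha \neq \beta} m_\alpha(x),
\]
with the convention that the empty minimum is $\infty$. Note that $f(x) = \min\{m_\beta(x), f_{\widehat\beta}(x)\}$, so $f(x) \le f_{\widehat{\beta}}(x)$ always holds.

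For the forward direction, suppose the minimum is attained at least twice, say at $\alpha_1 \neq \alpha_2$. For any $\beta$, at least one of $\alpha_1,\alpha_2$ differs from $\beta$. That index still realizes the value $f(x)$ inside $f_{\widehat{\beta}}(x)$, so $f_{\widehat{\beta}}(x) \le f(x)$, and equality follows.

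For the converse, suppose the minimum is attained exactly once, at $\beta$. Then every $m_\alpha(x)$ with $\alpha\ne\beta$ is strictly larger than $m_\beta(x)$, so $f_{\widehat{\beta}}(x) > f(x)$. The case $\Supp(f)=\{\beta\}$ gives $f_{\widehat\beta}(x) = \infty > f(x)$. Hence the bend relation for this $\beta$ fails.

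The argument has no real obstacle. The only care needed is in the setup: the solutions should be taken in $\R^n$, matching the definition of $V^{\trop}(f)$. For ordinary polynomials one could also allow points of $\T^n$. There, coordinates equal to $\infty$ kill some monomials, and the same argument works verbatim if "attained twice" is read among the finite values. I will state the result for $x\in\R^n$, and remark that the case $f=\infty$, where the support is empty, is vacuous on both sides.
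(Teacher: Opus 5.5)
Your proof is correct and follows the paper's argument, the same term-by-term comparison of $f(x)$ with $f_{\widehat{\beta}}(x)$; you add a valid justification the paper leaves implicit, namely that checking the generating bend relations suffices because $\ker(\mathrm{ev}_x)$ is a congruence. One small caveat on your side remarks: if every monomial of $f$ evaluates to $\infty$ at a point $p\in\T^n$ (e.g.\ $f=X$, $p=\infty$), then all bend relations hold at $p$ even though there are no finite values, so ``attained at least twice'' must be read over all exponents with $\infty$-valued terms counted (as in the paper's definition), not ``among the finite values''; this same reading is what makes your case $f=\infty$ give $\R^n$ on both sides.
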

\begin{proof}
Consider a point $p \in \T^n$.  The value of $f$ at $p$ is the minimum of the value of each linear term at $p$.  The valued of $f_{\widehat{\beta}}$ at $p$ is the minimum of all the terms other than the $\beta$ term.  These two are equal if and only if the minimum is not attained solely by the $\beta$ term.  We thus have the equality $f(p) = f_{\widehat{\beta}}(p)$ for all $\beta$ if and only if the minimum is not attained solely by any single term, which is to say that $f$ tropically vanishes at $p$.  
\end{proof}

The above proposition begins to justify our assertion that the bend relations of $f$ are the appropriate answer to the question of what equations define a tropical variety. 

Adapting our third perspective on  varieties to the tropical setting now proceeds easily: the set of solutions to the bend relations of $f$ is precisely the set of $\T$-algebra homomorphisms $\T[X_1^\pm, \ldots, X_n^\pm] \to \T$ that descend to the quotient by the congruence $\Bb(f)$.

Note that  $\Bb(f)$ determines $f$ up to multiplication by a scalar. 
    Indeed, let $f$ be a homogeneous tropical polynomial of degree $d$, that is $f$ is in $\T[X_1,\dots,X_n]_d \cong \T^N$, where
    $N=\binom{n+d-1}{d}$. Now, note that 
    \[
        \Hom_{\T\text{-mod}}\hspace{-3pt}\Big(\hspace{-2pt}\big(\T[X_1,\dots,X_n]/\Bb(f)\big)_d, \T\Big) = \big\{w\in \T^N \mid w\perp f\big\} = f^\perp,
    \]
    where $\langle w, f \rangle\coloneqq\bigoplus_{i=1}^{N} w_i\odot f_i$, and by definition
    \[
        f^\perp\coloneqq\big\{w\in \T^N\mid \min_{i}\{w_i+f_i\} \text{ is attained at least twice} \big\}.
    \]
    Finally, a standard fact from tropical linear algebra gives us that  $(f^\perp)^\perp = \Span(f)$.
\subsection{Tropicalization} There are many different constructions called tropicalization to be found in the literature.  In these lectures we will focus only on the following ones:
\begin{enumerate}
    \item Tropicalizing polynomials by valuating their coefficients;
    \item Tropicalizing varieties by either valuating the coordinates of their points or by tropicalizing their defining polynomials;
    \item Tropicalizing linear spaces as a special case of varieties.
\end{enumerate}

A \emph{valuation} on a field $K$ is a map $\nu\co K\to \T$ such that, for all $a,b\in K$ it satisfies the following three axioms:
\begin{enumerate}[label = (V\arabic*)]
    \item\label{axiom: valuatioun1} $\nu^{-1}(\infty)=\{0\}$;
    \item\label{axiom: valuatioun2} $\nu(ab) = \nu(a)\odot \nu(b)$;
    \item\label{axiom: valuatioun3} $\nu(a+b)\ge \nu(a)\oplus\nu(b)$.
\end{enumerate}
Let $\nu$ be a valuation. By Axiom \ref{axiom: valuatioun2}, it follows that $\nu(a)=\nu(1a)$ is equal to $\nu(1)\odot\nu(a)=\nu(1)+\nu(a)$ for all $a\in K$. This implies that $\nu(1)=1_\T=0$. Moreover, $0=\nu(1)=\nu\big((-1)^2\big)=2\nu(-1)$, i.e.\ $\nu(-1)=1_\T=0$. Therefore $\nu(-a)=\nu(a)$ for each $a\in K$. For an element $a\in K^*$, we have $0=\nu(aa^{-1})=\nu(a)+\nu(a^{-1})$, i.e.\ $\nu(a^{-1})=-\nu(a)$.  
Furthermore, if $\nu$ is a valuation, then for each $\lambda\in \R_{>0}$ the map $\lambda \nu$ (ordinary multiplication) is also a valuation. The \emph{trivial valuation} is the map $\mu\co K\rightarrow \T$ defined as $\mu(t)=1_\T = 0$ if $t\in K^*$ and $\mu(0)=0_{\T} = \infty$. For the readers interested in learning more about valuations, we refer to \cite{Ribenboim_1999}.

\begin{Exa}[$p$-adic valuation]\label{Exa: valuation p-adic}
    Let $p\in \N$ be prime. The \emph{$p$-adic valuation} on the rational numbers $\Q$ is the valuation $\nu_p\co\Q\to\T$ defined by:
    \[
        \nu_p(r)\coloneqq
        \begin{cases}
            \ell, & \text{if } r=p^\ell \displaystyle\frac{a}{b} \quad \text{ and } \quad p\nmid ab, \\[-11pt] \\
            \infty, & \textup{ if } r=0,
        \end{cases}
    \]
    where $a$ and $b$ are integers. For example, if $p=3$, then $\nu_{3}(5/6)=-1$ and $\nu_3(27/7)=3$.
\end{Exa}

\begin{Exa}[Puiseux series]\label{Exa: valuation Puiseux series}
    Let $K\{\hspace{-3pt}\{t\}\hspace{-3pt}\} $ be the field of \emph{Puiseux series} with coefficients in the field $K$. An element $s(t)$ of $K\{\hspace{-3pt}\{t\}\hspace{-3pt}\}$ is a formal power series
    \[
        s(t)= s_1t^{k_1/n}+s_2t^{k_2/n}+s_3t^{k_3/n}+\cdots,
    \]
    where each $s_i\in K$, $n$ is a non-null natural number, and $k_1<k_2<\cdots$ are integers. We define a valuation $\tau\co K\{\hspace{-3pt}\{t\}\hspace{-3pt}\}\to \T$ by $\tau\big(s(t)\big)=\min\{k_i/n\mid s_i\neq 0\}$ if $s(t)$ is in $K\{\hspace{-3pt}\{t\}\hspace{-3pt}\}^*$ and $\tau(0)=\infty$.
\end{Exa}

Let
$\nu: K \rightarrow \T$ be a valuation, and $F = \sum_{\alpha}c_\alpha X^\alpha$ a polynomial in $K[X_1,\dots,X_n]$. We apply the valuation $\nu$ on the coefficients of $F$ to obtain a tropical polynomial $\Vtrop(F)$ in $\T[X_1,\dots,X_n]$, i.e.\ 
\[
    \Vtrop(F)\coloneqq\bigoplus_{\alpha}\nu(c_\alpha)\odot X^\alpha.   
\]
If $J\subseteq K[X_1,\dots,X_n]$ is an ideal, then its \emph{tropicalization} is defined as
\[
    \Vtrop(J)\coloneqq \langle \Vtrop(g) \mid g\in J \rangle \subseteq \T[X_1,\dots,X_n].   
\]
If the ideal $J$ is homogeneous, then $\Vtrop(J)$ is a homogeneous ideal as well. Note that, to obtain the tropicalization of an ideal, we need to tropicalize all polynomials $g\in J$, not just a generating set. For instance, if $J$ is finitely generated, say $J=\langle g_1, \dots, g_\ell\rangle$, then $\langle \Vtrop(g_1),\dots,\Vtrop(g_\ell)\rangle \subseteq \Vtrop(J)$, but a proper inclusion can occur. 
\begin{Exa}\label{Exa: trop_ideal}
Consider $g=X+1 \in K[X]$.  Then $\Vtrop(g) = X\oplus 0$.  Since cancellations cannot happen over $\T$, given any $h \in \langle \Vtrop(g) \rangle$, if the leading monomial term of $h$ is proportional to $X^n$ then $X^{n-1}$ must also be present.  However, $(X-1)g = X^2 - 1$, and so the tropicalization of the ideal $\langle g \rangle$ contains $X^2 \oplus 0$, which doesn't satisfy the above condition and hence is not in $\langle \Vtrop(g) \rangle$.
\end{Exa}

Consider an ideal $J\subseteq K[X_1,\dots,X_n]$ over an algebraically closed field $K$ equipped with a valuation.  For simplicity, let us assume that the valuation is surjective to $\T$ (this can always be arranged by passing to a valued field extension).  The ideal $J$ defines a variety $V(J) \subset K^n$.  The tropicalization of the set $V(J)$ can be defined as 
\[
    \Vtrop\big(V(J)\big) \coloneqq \left\{\big(v(p_1), \ldots, v(p_n)\big) \in T^n \:\: | \:\: (p_1, \ldots, p_n) \in V(J)\right\}.
\]
However, thanks to the Fundamental Theorem of Tropical Geometry, this tropicalization can alternatively be described as the intersection of all tropical hypersurfaces associated with the tropicalizations of polynomials in $J$:
\[
   \Vtrop\big(V(J)\big)  = \bigcap_{f\in\Vtrop(J)} V^\Vtrop(f) \subseteq \T^n.
\]

We have seen that a tropical hypersurface $V^\Vtrop(f)$ can be described as the solution set to the system of equations given by the bend relations of $f$, and so $\Vtrop\big(V(J)\big)$ is the solution set to the bend relations of $\Vtrop(J)$.  Following Grothendieck's insight in developing scheme theory, a system of equations contains more information than its solution set over a particular coefficient (semi)field, and so we should consider the systems of equations as geometric objects in their own right.  This motivates a definition:

\begin{definition}  The \emph{scheme-theoretic tropicalization} of an affine scheme $\Spec K[X_1, \ldots, X_n]/J$ embedded in $\mathbb{A}_K^n$ is the object whose algebra of regular functions is
\[
\T[X_1, \ldots, X_n]/\Bb\big(\Vtrop(J)\big).
\]
\end{definition}

Ideals in $\T[X_1 \ldots, X_n]$ of the form $\trop(J)$ have some nice properties that can be abstracted to the notion of \emph{tropical ideals}. To explain this, we first need a short digression to recall some basic facts about tropical linear spaces.

\subsection{Tropical linear spaces}
Let $E$ be a (finite) set, and $r\in\N$. We set $\binom{E}{r}\coloneqq\{V\in 2^E \mid \#V=r\}$. A \emph{valuated matroid} on the ground set $E$ and rank $r$ is a pair $M=(E,\mu)$ where $\mu:\binom{E}{r}\to \T$ is a non-trivial map, i.e.\ $\mu(B)\neq\infty$ for some $B\in\binom{E}{r}$, satisfying the following axiom:  
    \begin{enumerate}[label=(VM)]
        \item\label{axiom: valuated_matroid}
            For every $B,C \in \binom{E}{r}$, and $b\in B\setminus C$ there exist $c\in C\setminus B$ such that 
         \[
            \mu(B) + \mu(C)\ge \mu\big((B\cup\{c\})\setminus\{b\}\big) + \mu\big((C\cup\{b\})\setminus\{c\}\big).
        \]
     \end{enumerate}
Dress and Wenzel were the first to define and work with valuated matroids in their seminal work \cite{Dress_Wenzel_1992}. 

The \emph{valuated basis exchange axiom} \ref{axiom: valuated_matroid} is equivalent to the \emph{Pl\"{u}cker relations}; see, for instance \cite{Baker_Bowler_2019} and \cite{Jarra_Lorscheid_Vital_2024}.
A set $B\in\binom{E}{r}$ is called a \emph{basis} of the valuated matroid $M$ if $\mu(B)\neq 0_{\T}$. Let $N=(E,\mathcal{B})$ be a (classical) matroid with bases $\mathcal{B}$ and rank $r$. Define $\mu_N: \binom{E}{r}\to \T$ as $\mu_N(B)=1_\T$ if $B\in\mathcal{B}$, and $\mu_N(B)=0_\T$ otherwise. Then, $M_N=(E,\mu)$ is a valuated matroid.
In this sense, a valuated matroid is a generalization of a matroid. Note that if $(E,\mu)$ is a valuated matroid, then $(E, a\mu)$ is also a valuated matroid, for any $a\in \R_{>0}$. Furthermore, for any real number $b$, it follows that $(E,\mu+b)$ is a valuated matroid as well. 

\begin{Exa}[Uniform matroid]
    Let $E=[n]\coloneqq\{1,2,\dots,n\}$, and let $d \leq n$ be a natural number. Define $\mu: \binom{E}{d}\to \T$ as a constant function, for example, say $\mu(B)=1_\T$ for each $B\in\binom{E}{d}$. It is immediate to see that $\mu$ satisfies axiom \ref{axiom: valuated_matroid}. Thus, the pair $(E,\mu)$ is a valuated matroid. For a more specific and geometric description, see \cite[Example. 2.1]{Fink_Olarte_2022}.
\end{Exa}

A linear space $V\subset K^n$ of dimension $d$ is determined by its Pl\"{u}cker coordinate vector $P \in K^{\binom{[n]}{d}}$.  Applying the valuation to each component of $P$ yields a vector $\Vtrop(P) \in \T^{\binom{[n]}{d}}$ that can be shown to satisfy Axiom \ref{axiom: valuated_matroid}.  Valuated matroids arising in this way are said to be \emph{realisable} over the valued field $K$.

The \emph{Dressian} $\mathit{Dr}(n,d) \subset \P\T^{\binom{[n]}{[d]}}$ is the space of all valuated matroids up to tropical scaling.  For any valued field $K$, there is the subspace of all valuated matroids that are realizable over $K$, and this is called the \emph{tropical Grassmannian} $\mathit{Gr}^{\trop}(n,d)$.  See, for instance, \cite{Speyer_Sturmfels_2004}, \cite{Herrmann_Joswig_Speyer_2014}, and \cite{Iezzi_Schleis_2023}.

A valuated matroid $p \in \T^{\binom{[n]}{d}}$ determines a submodule $L(p)\subset \T^n$, called its associated \emph{tropical linear space}, as follows. For each subset $A \in \binom{[n]}{d-1}$ we have the vector $\alpha_A \in \T^n$ given by
\[
(\alpha_A)_i = p_{A\cup \{i\}};
\]
this is called the \emph{valuated co-circuit vector} associated with $A$.  The tropical linear space $L(p)$ is the $\T$-submodule of $\T^n$ spanned by the set of all valuated co-circuit vectors. It can be shown out that $L(p)$ determines $p$ up to tropical multiplication by a scalar.  Hence we can talk about realizable and non-realizable tropical linear spaces just as we do for valuated matroids. One can also construct $L(p)$ as the intersection of the tropical hypersurfaces associated with the \emph{valuated circuit vectors}
\[
\beta_B = \bigoplus_{i\in B} p_{B \smallsetminus \{i\}} \odot X_i \in \T[X_1, \ldots, X_n]
\]
for each $B\in \T^{\binom{[n]}{d+1}}$. 

A foundational fact about linear spaces and their tropicalizations is that, if we start with a linear space $V \subset K^n$, then the tropicalization of $V$ as a variety coincides with the tropical linear space $L\big(\trop(P)\big)$ determined by the tropicalization of $P$; see 
\cite[Prop.\ 4.4.4]{maclagan2015introduction}.  This can be thought of as a linear version of the Fundamental Theorem.
 For further details on tropical linear spaces and valuated matroids, we refer the reader to \cite{Speyer_2008}, \cite[\S~5.5]{maclagan2015introduction}, and \cite{Frenk}.

\subsection{Tropical ideals}

We are now ready to return to the task of examining and generalizing the ideals of $\T[X_1,\dots,X_n]$ of the form $\trop(J)$.

Given an ideal $I \subseteq \T[X_1,\dots,X_n]$, and $d\ge 0$ we denote by $I_{\le d}$ the subset of $I$ consisting of polynomials with degree at most $d$.
     
\begin{definition}\label{def: tropical_ideal}
    A \emph{tropical ideal} $I\subseteq \T[X_1,\dots,X_n]$ is an ideal such that $I_{\le d}$ is a tropical linear space for each $d\ge 0$.
\end{definition}
 Equivalently, a tropical ideal $I$ is an ideal that satisfies the \emph{vector elimination axiom} of a valuated matroid: For any $f=\bigoplus_{\alpha}f_\alpha\odot X^\alpha$ and $g=\bigoplus_\alpha g_\alpha\odot X^\alpha$ in $I_{\le d}$ such that  $f_\beta = g_\beta\neq \infty$, there exist a polynomial
\[
    h =\bigoplus_\alpha h_\alpha\odot X^\alpha\in I_{\le d} \quad \text{ with } \quad
    \begin{cases}
        h_\beta = \infty \\
        h_\alpha \ge f_\alpha\oplus g_\alpha \textup{ for all } \alpha,\textup{ with equality whenever } f_\alpha\neq g_\alpha
    \end{cases}
\]
(see \cite[Def. 1.1]{maclagan2018tropical}).  

Given an ideal $J \subseteq K[X_1,\dots,X_n]$, its tropicalization $\Vtrop(J)$ is a tropical ideal; see \cite[pg. 641]{maclagan2018tropical}.  On the other hand, there exist \emph{non-realizable} tropical ideals, i.e., tropical ideals that are not of the form $\Vtrop(J)$; see \cite[Example 2.8]{maclagan2018tropical} and \cite[Theorem A]{FinkGiansiracusa2X}. 

Tropical ideals have many interesting properties that make them either very nice to work with or frustrating, depending on your angle. Here we present a few of them.

General ideals in $\T[X_1, \ldots, X_n]$ might not be finitely generated, and the class of all ideals does not satisfy the ascending chain condition.  
For tropical ideals, the question of finite generation is even worse: they are almost never finitely generated as ideals. The tropicalization of the ideal $\langle X - 1 \rangle$ demonstrates this, where for each $d$ we see that $X^d \oplus 0$ is present but is not in the ideal generated by elements in strictly lower degrees.  Worse still, a tropical ideal need not be determined by any of its truncations to a finite degree, as shown in \cite[Example 3.10]{maclagan2018tropical}.   This poses an important set of question: 

\begin{question}
How can we construct non-realizable tropical ideals using a finite amount of information?
\end{question}
\begin{question}
How can we work with tropical ideals (beyond the realizable case) in computer algebra systems?
\end{question}

On the other hand, tropical ideals 
satisfy the ascending chain condition; i.e., there is no infinite ascending chain of tropical ideals 
\[
    J_1 \subsetneq J_2 \subsetneq J_3 \subsetneq \cdots 
\]
in $\T[X_1, \dots, X_n]$; see \cite[Thm. 3.11]{maclagan2018tropical}, where this was proven by developing a Gr\"{o}bner theory for tropical ideals.

Given a tropical ideal $I$, it has a Hilbert function defined using ranks of valuated matroids (or dimensions of tropical linear spaces) in place of dimensions of linear spaces), and it follows by an argument using the ACC that the Hilbert function eventually agrees with an integer-coefficient polynomial function \cite[Thm. 3.8]{maclagan2018tropical}. Moreover, tropicalization of homogeneous ideals preserves the Hilbert function; i.e., the Hilbert functions of an ideal $J \subset K[X_1,\dots,X_n]$ and the tropical ideal $\trop(J)$ are identical.

For a tropical ideal $I$, the associated tropical variety $V^\Vtrop(I)$ is the intersection of the tropical hypersurfaces $V^\Vtrop(f)$ for $f\in I$. A striking theorem (\cite[Thm. 6.6]{maclagan2022varieties}) says that $V^\Vtrop(I)$ is a balanced weighted polyhedral complex.  

One might be tempted to declare that the right definition of `tropical variety' is anything coming from a tropical ideal.  However, the results of \cite{draisma2021tropical} should at least give one pause.  They show that there exist tropical linear spaces $L \subset \T^n$ that are not the variety of any tropical ideal. Indeed, let $V_8$ and $\mathcal{U}_{2,3}$ be the V\'{a}mos matroid and a uniform matroid respectively. Then the direct sum $V_8\oplus\mathcal{U}_{2,3}$ gives us such an example.

The V\'{a}mos matroid is non-realizable, and hence so is the sum $V_8\oplus\mathcal{U}_{2,3}$.  Hence one can ask the following.

\begin{question}
Is there a tropical ideal $I$ such that $V^\Vtrop(I)$ is a non-realizable tropical linear space?
\end{question}

A tropical ideal $I$ and its associated bend congruence $\Bb(I)$ are equivalent packagings of the same information, and the set of homomorphisms
$\T[X_1, \ldots, X_n]/\Bb(I) \to \T$
is precisely the tropical variety $V^\Vtrop(I)$.  Hence we should consider
the algebra 
\[
\T[X_1, \ldots, X_n]/\Bb(I)
\]
as the algebra of functions of the tropical \emph{scheme}.  With an appropriate notion of $\Spec$ for $\T$-algebras, we can consider these to be the local affine models for constructing general non-affine \emph{tropical schemes}.  A fascinating challenge that we will not discuss here is:

 \begin{question}
 What is the appropriate notion of gluing these affine pieces together?
 \end{question}

\section{Second lecture}\label{sec:SecondLecture}

In this lecture we will explore the relationship between the scheme-theoretic perspective on tropicalization via bend relations and Berkovich's analytic spaces.

\subsection{The story for linear spaces}
Let $K$ be a field, $v \co K\to \T$ valuation, and $|a| \coloneqq e^{-v(a)}$ the corresponding norm. For an $n$-dimensional $K$-vector space $V$, a \emph{$K$-semivaluation} on $V$ is a map $w\co V\to \T$ such that $w(a+b) \geq \min\{w(a),w(b)\}$ and $w(\lambda a )= v(\lambda) \odot w(a)$.

\begin{lemma}
Let $a,b \in V$ with $w(a) \neq w(b)$. Then $w(a+b) = \min\{w(a),w(b)\}$.
\end{lemma}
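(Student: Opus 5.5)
The plan is to run the standard ultrametric argument: first establish the symmetry $w(-b) = w(b)$, then combine the semivaluation inequality with a decomposition of $a$ in terms of $a+b$ and $b$.

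By symmetry of the statement in $a$ and $b$, we may assume $w(a) < w(b)$; in particular $w(a)$ is finite. The inequality $w(a+b) \geq \min\{w(a),w(b)\} = w(a)$ is immediate from the definition of a $K$-semivaluation. Everything therefore reduces to proving the reverse inequality $w(a+b) \leq w(a)$.

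For the reverse inequality, the first step is to check that $w(-b) = w(b)$. Apply the homogeneity axiom with $\lambda = -1$ to get
\[
w(-b) = v(-1) \odot w(b).
\]
Earlier in the paper it was shown that every valuation satisfies $v(-1) = 0 = 1_\T$, so $w(-b) = w(b)$.

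Next, write $a = (a+b) + (-b)$ and apply the semivaluation inequality:
\[
w(a) \geq \min\{w(a+b), w(-b)\} = \min\{w(a+b), w(b)\}.
\]
Since $w(a) < w(b)$, this minimum cannot equal $w(b)$. It must therefore equal $w(a+b)$, which gives $w(a) \geq w(a+b)$. Combining this with the first inequality yields $w(a+b) = w(a) = \min\{w(a),w(b)\}$.

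The only mildly delicate point is the symmetry $w(-b) = w(b)$, which relies on $v(-1) = 0$ as established above. The argument also uses only the strict inequality $w(a) < w(b)$, so it remains valid when $w(b) = \infty$.
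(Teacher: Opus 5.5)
Your proof is correct. The paper states this lemma without proof, and your argument is the standard one the authors implicitly rely on: reduce to $w(a)<w(b)$, derive $w(-b)=w(b)$ from homogeneity and the fact $v(-1)=0$ shown in the first lecture, then apply the ultrametric inequality to $a=(a+b)+(-b)$. This also handles $w(b)=\infty$.
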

In particular, we observe that for any triple $a,b,c$ satisfying $a+b+c=0$ we have that the minimum of $w(a),w(b)$, and $w(c)$ is attained at least twice. Thus, $K$-semivaluations are solutions to certain bend relations. In \cite[\S~7]{bkkuv_buildings} it was shown, that these are the points of a certain tropicalization. In the following, we will sketch this idea.

Given a vector space $V$, let us consider the rather larger vector space $\widehat{V}$ with basis given by the underlying set of $V$.  I.e., $\widehat{V} \coloneqq \bigoplus_{a \in V} K$, and we write the basis as $\{x_a\mid a\in V\}$. There is a canonical projection map 
\[
\pi\co \widehat{V}\to V
\]
defined by sending $x_a \mapsto a$, and hence there is a linear isomorphism $V\cong \widehat{V}/\ker(\pi)$. 
We will be tropicalizing this presentation of $V$.  This might appear counter to the way we usually tropicalize subspaces of a fixed space (a torus of affine space).  However, we will take the scheme-theoretic dual perspective here.  The large vector space $\widehat{V}$ is the space of linear functions on an ambient space (analogous to the polynomial algebra $K[X_1, \ldots, X_n]$), and $V$ is the space of linear functions on a sub-object $Z$ (analogous to $K[X_1, \ldots, X_n]/J$); the map $\pi$ identifies functions that agree on $Z$.  The tropicalization of this geometric picture is given by the $\T$-module
\[
\Vtrop_\pi(V) \coloneqq \left( \bigoplus_{a\in V} \T \right) / \Bb\big(\trop(\ker(\pi))\big).
\]
Understanding the structure of the tropicalization clearly requires a description of the kernel of $\pi$.

\begin{proposition}\label{prop:ker-pi-spanning-set}
The kernel of $\pi$ is spanned by the elements:
\begin{itemize}
\item $x_a+x_b+x_c$ for $a+b+c=0$, and
\item $x_{\lambda a} - v(\lambda)x_a$ for $\lambda\in K, a\in V$. 
\end{itemize}
\end{proposition}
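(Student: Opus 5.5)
The plan is to prove the two inclusions separately, and the only delicate point is how to read the second family. The coefficient $v(\lambda)$ lies in $\T$, not in $K$, so $x_{\lambda a}-v(\lambda)x_a$ is not literally a vector of $\widehat{V}=\bigoplus_{a\in V}K$. I will read $v(\lambda)x_a$ as notation for the $\lambda$-multiple of $x_a$. This is the term whose tropicalization carries the coefficient $v(\lambda)$, so that $\trop$ of this element is $x_{\lambda a}\oplus v(\lambda)\odot x_a$, which is the form relevant to $\Bb\big(\trop(\ker(\pi))\big)$. This is an interpretive choice about notation, not something the statement settles. With this reading, the first inclusion is immediate. $\pi(x_a+x_b+x_c)=a+b+c=0$, and $\pi$ sends the $\lambda$-multiple of $x_a$ to $\lambda a=\pi(x_{\lambda a})$. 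Hence both families lie in $\ker(\pi)$.

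For the reverse inclusion, let $W$ be the span of the two families. I first record the consequences of the relations that will be needed, all taken modulo $W$.
\begin{itemize}
\item Taking $\lambda=0$, the scalar relation is just $x_0$ (its coefficient has valuation $v(0)=\infty$, i.e.\ the $x_a$ term vanishes). Hence $x_0\in W$.
\item Taking $\lambda=-1$ gives $x_{-c}\equiv -x_c$.
\item For any $a,b$, put $c=-(a+b)$. The additive relation gives $x_a+x_b\equiv -x_{-(a+b)}\equiv x_{a+b}$.
\end{itemize}

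Now take a general element $\sum_i\mu_i x_{a_i}\in\ker(\pi)$ with $\mu_i\in K$, so that $\sum_i\mu_ia_i=0$ in $V$.
\begin{enumerate}
\item Use the scalar relations to replace each $\mu_i x_{a_i}$ by $x_{\mu_i a_i}$ modulo $W$.
\item Apply $x_a+x_b\equiv x_{a+b}$ repeatedly, by induction on the number of terms, to collapse the sum to the single term $x_{\sum_i\mu_ia_i}=x_0$.
\item Conclude, since $x_0\in W$.
\end{enumerate}
Therefore $\ker(\pi)\subseteq W$, and the two families span $\ker(\pi)$.

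I expect no real mathematical obstacle; the main care needed is in the bookkeeping of the degenerate cases. One must make sure that $x_0$, and the relation with $\lambda=-1$, are genuinely available from the stated families. One must also check that, under the reading of the coefficient chosen above, the relation with $\lambda=0$ collapses to $x_0$ rather than to something meaningless. The step where I would be most careful is precisely this interpretation of $v(\lambda)x_a$, since the whole argument rests on it.
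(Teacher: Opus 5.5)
Your proof is correct, and it is the natural elementary argument; the paper states this proposition without proof. Converting each $\mu_i x_{a_i}$ into $x_{\mu_i a_i}$, collapsing the sum via $x_a+x_b\equiv x_{a+b}$ (from the three-term relation and the case $\lambda=-1$), and finishing with $x_0\in W$ (the case $\lambda=0$) is all that is needed. Your reading of $v(\lambda)x_a$ as $\lambda x_a$ is the intended one, since the paper writes $x_{\lambda a}-\lambda x_a$ both when it restates this fact for $\ker(\ev)$ and in the proof of the colimit theorem.
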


The kernel of $\pi$ is a linear subspace of $\widehat{V}$ (albeit infinite dimensional if $K$ is an infinite field), and so the tropicalization of $\ker(\pi)$ is a tropical linear space in $\widehat{V}$ (corresponding to a valuated matroid on ground set $V$).  In general, tropicalizing a spanning set for a linear space $L$ does not yield a generating set for the tropical linear space $\Vtrop(L)$, and the bend relations of a spanning set need not generate the bend congruence of $\Vtrop(L)$.  However, in the case of the spanning set Proposition \ref{prop:ker-pi-spanning-set}, this is true.

\begin{proposition}\label{prop:bend-rels-generate}
The congruence $\bend \big(\Vtrop (\ker(\pi))\big)$ is generated by:
\begin{itemize}
\item $\bend(x_a\oplus x_b \oplus x_c )$ for $a+b+c=0$, and
\item $x_{\lambda a} \sim v(\lambda)x_a$ for $\lambda\in K, a\in V$. 
\end{itemize}
\end{proposition}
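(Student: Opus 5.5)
Write $\mathcal{C}$ for the congruence on $\bigoplus_{a\in V}\T$ generated by the relations listed in the statement. The plan is to prove the two inclusions $\mathcal{C}\subseteq \bend(\Vtrop(\ker\pi))$ and $\bend(\Vtrop(\ker\pi))\subseteq\mathcal{C}$ separately.

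For $\mathcal{C}\subseteq \bend(\Vtrop(\ker\pi))$, I would check the generators one at a time.
\begin{itemize}
\item If $a+b+c=0$, then $x_a+x_b+x_c\in\ker\pi$. Its tropicalization is $x_a\oplus x_b\oplus x_c$, so its bend relations are among the bend relations of $\Vtrop(\ker\pi)$.
\item The element $x_{\lambda a}-\lambda x_a$ lies in $\ker\pi$, and its tropicalization is $x_{\lambda a}\oplus v(\lambda)x_a$. Here I read the second spanning element of Proposition~\ref{prop:ker-pi-spanning-set} as $x_{\lambda a}-\lambda x_a$, which is what it must mean for it to lie in $\ker\pi$. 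A binomial's bend relations are exactly $x_{\lambda a}\oplus v(\lambda)x_a\sim x_{\lambda a}\sim v(\lambda)x_a$.
\item When $\lambda=0$ or $a=0$ the relation degenerates to $x_0\sim\infty$, which comes from $x_0\in\ker\pi$.
\end{itemize}

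For the reverse inclusion, the key reduction is to work with normal forms. Take any $f\in\ker\pi$ and write $\Vtrop(f)=\bigoplus_{a\in S}v(f_a)x_a$ with $S=\Supp$ finite. Modulo $\mathcal{C}$, every monomial $v(\mu)x_a$ is equivalent to $x_{\mu a}$. So each tropical linear form $\bigoplus_a c_a x_a$ with coefficients in the value group is $\mathcal{C}$-equivalent to a sum of bare variables $\bigoplus_{a\in S} x_{\mu_a a}$, where $v(\mu_a)=c_a$. Applying this to $\Vtrop(f)$, I may assume without loss of generality that $f=\sum_{a\in S}x_a$ with the $a\in S$ distinct and $\sum_{a\in S} a=0$.

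It then suffices to show that for such $f$ and each $b\in S$, the relation
\[
\bigoplus_{a\in S}x_a\;\sim\;\bigoplus_{a\in S\setminus\{b\}}x_a
\]
lies in $\mathcal{C}$. I would prove this by induction on $|S|$.
\begin{itemize}
\item For $|S|\le 3$ the relation is literally one of the generators, after allowing padding by $x_0\sim\infty$.
\item For $|S|>3$, pick $b'\ne b$ in $S$ and set $c=-(b+b')$. The generator $\bend(x_b\oplus x_{b'}\oplus x_c)$ gives $x_b\oplus x_{b'}\sim x_b\oplus x_{b'}\oplus x_c\sim x_{b'}\oplus x_c$. Meanwhile $S'=(S\setminus\{b,b'\})\cup\{-c\}$ has smaller size and sums to zero.
\item Using $x_{-c}\sim x_c$ (the scalar relation with $\lambda=-1$, $v(-1)=0$), the induction hypothesis gives $\bigoplus_{S\setminus\{b,b'\}}x_a\oplus x_c\sim\bigoplus_{S\setminus\{b,b'\}}x_a\oplus x_c\oplus\cdots$ with terms removable. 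Combining these, I can drop $x_b$ from $\bigoplus_S x_a$ by a chain of substitutions.
\end{itemize}
Coincidences among the elements, such as $-c\in S$ or $c=0$, have to be handled by merging terms using idempotence $x\oplus x=x$.

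The main obstacle is exactly this combinatorial induction. Concretely, one must ensure that removing $x_b$ in the three-term relation and then re-adding $x_c$ does not change the total sum modulo the smaller-case relations. I would first try to establish the equivalence
\[
\bigoplus_S x_a\sim\bigoplus_S x_a\oplus x_c
\]
from the triangle relation together with the smaller case for $S'$. I would then use the smaller case again to delete $x_b$, and finally delete $x_c$.
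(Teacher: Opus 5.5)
Your strategy is the right one. The paper itself only says the proof consists of elementary manipulations analogous to \cite[Prop.~3.4.3]{Giansiracusa2x2022}, and yours are of that kind: normalise every coefficient to $0$ using $x_{\mu a}\sim v(\mu)\odot x_a$, then induct on the number of terms with the three-term relations.

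The one step that fails as written is ``without loss of generality the $a\in S$ are distinct''. After you replace $v(f_a)\odot x_a$ by $x_{f_a a}$, the vectors $f_a a$ can coincide. For example, $f=2x_a+x_{2a}+x_{-4a}$ yields $2a,2a,-4a$. Merging repeated vectors by idempotence destroys the zero-sum condition, so you do not land in your set-indexed induction. The repair is to induct over finite families $(s_i)_{i\in I}$ with $\sum_i s_i=0$, proving $\bigoplus_{i\in I}x_{s_i}\sim\bigoplus_{i\neq i_0}x_{s_i}$ for every $i_0$.
\begin{itemize}
\item Tropical sums ignore multiplicities, so deleting a repeated entry is trivial.
\item The only new base case is deleting $-2e$ from $(e,e,-2e)$, i.e.\ $x_e\oplus x_{-2e}\sim x_e$. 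This follows from $x_{-2e}\sim v(2)\odot x_e$ together with $v(2)\ge 0$.
\end{itemize}
In the same vein, the three-term generators must be read for pairwise distinct $a,b,c$. This also applies to your first inclusion, where you assert $\Vtrop(x_a+x_b+x_c)=x_a\oplus x_b\oplus x_c$. Applying $\bend$ to the merged element $x_a\oplus x_{-2a}$ would force $x_a\sim x_{-2a}$. That relation is false when $v(2)>0$: evaluate at the semivaluation $v$ on $V=K$.

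Your inductive step does go through, and more simply than your last paragraph suggests. Set $R=\bigoplus_{a\in S\setminus\{b,b'\}}x_a$ and $c=-(b+b')$. Then
\[
R\oplus x_b\oplus x_{b'}\;\sim\;R\oplus x_b\oplus x_{b'}\oplus x_c\;\sim\;R\oplus x_{b'}\oplus x_c\;\sim\;R\oplus x_{b'}.
\]
The first two equivalences use only the three-term relation for the family $(b,b',c)$, after which you add $R$. Only the last equivalence uses the induction hypothesis. It applies to the family $S'$ with $-c$ deleted, giving $R\oplus x_{-c}\sim R$; combine this with $x_{-c}\sim x_c$ and add $x_{b'}$. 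In particular, deleting $x_b$ comes from the triangle relation, not from the smaller case.
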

\begin{proof}
The argument is by elementary algebraic manipulations analogous to \cite[Prop. 3.4.3]{Giansiracusa2x2022}
\end{proof}

\begin{corollary}\label{Cor: semi_val_trop_linear_space}
    The points of $\Vtrop(V)$, i.e., morphisms $\Vtrop(V) \to \T$, are exactly the $K$-semivaluations on $V$. Hence, the space of semivaluations on $V$ is a tropical linear space.
\end{corollary}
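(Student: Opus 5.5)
By the universal property of quotients, a $\T$-module morphism $\Vtrop_\pi(V) \to \T$ is the same thing as a $\T$-linear map $\phi\colon \bigoplus_{a\in V}\T \to \T$ that respects every relation in $\Bb\big(\Vtrop(\ker(\pi))\big)$. A linear map out of a free module is determined by the images of the basis elements. So $\phi$ is the same datum as a function $w\colon V\to\T$, with $w(a) \coloneqq \phi(x_a)$. The plan is to show that $\phi$ descends to $\Vtrop_\pi(V)$ exactly when $w$ is a $K$-semivaluation. By Proposition~\ref{prop:bend-rels-generate}, we only need to check that $\phi$ respects the listed generators, since the set of pairs identified by a linear map is a congruence and so contains the generated congruence.

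The two families of generators translate as follows.

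\begin{enumerate}
\item The relation $x_{\lambda a}\sim v(\lambda)x_a$ holds under $\phi$ exactly when $w(\lambda a) = v(\lambda)\odot w(a)$, which is the homogeneity axiom.
\item For each triple with $a+b+c=0$, the bend relations of $x_a\oplus x_b\oplus x_c$ hold under $\phi$ exactly when $\min\{w(a),w(b),w(c)\}$ is attained at least twice. This is the same elementary argument as in the proposition identifying the solution set of bend relations with $V^\Vtrop(f)$, applied to a linear form evaluated at the point $(w(a),w(b),w(c))$.
\end{enumerate}

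It remains to show that a function $w$ with homogeneity is ultrametric if and only if it satisfies the min-attained-twice condition on triples summing to zero.

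Suppose $w$ is a semivaluation and $a+b+c=0$. Then $c=-(a+b)$, and homogeneity with $v(-1)=0$ gives $w(c)=w(a+b)$. If $w(a)\neq w(b)$, the Lemma above gives $w(c)=\min\{w(a),w(b)\}$, so the minimum is attained twice. If $w(a)=w(b)$, the minimum is already attained twice.

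Conversely, given the triple condition, apply it to the triple $(a, b, -(a+b))$. Homogeneity gives $w(-(a+b))=w(a+b)$, and the minimum of $w(a), w(b), w(a+b)$ being attained twice forces $w(a+b)\ge \min\{w(a),w(b)\}$. The edge cases where some vector is zero are handled uniformly: homogeneity with $\lambda=0$ gives $w(0)=\infty$, since $v(0)=\infty$.

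Putting these together, morphisms $\Vtrop_\pi(V)\to\T$ correspond bijectively to $K$-semivaluations on $V$.

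For the final sentence of the statement, the set of such morphisms is the set of $\phi\in\Hom(\bigoplus_a\T,\T)\cong\T^V$ satisfying bend relations of tropicalized elements of $\ker(\pi)$. This is the set of vectors $w$ such that, for every $f\in\Vtrop(\ker\pi)$, the minimum of $\langle w,f\rangle$ is attained at least twice. In other words it is the orthogonal complement $\Vtrop(\ker\pi)^\perp$. The orthogonal complement of a tropical linear space (valuated matroid) is again a tropical linear space, namely the one of the dual valuated matroid, so we cite this duality.

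The main subtlety is this last step in the infinite-dimensional setting, since the ground set $V$ may be infinite. I would handle it either by invoking the version of valuated-matroid duality for arbitrary ground sets (used in \cite{bkkuv_buildings}), or by reducing to finite subsets of $V$: restrict to each finite subset and take the compatible limit of finite orthogonal complements. Everything else is a direct unwinding of definitions.
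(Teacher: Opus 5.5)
Your proposal is correct and follows the paper's own argument. A point is a function $w(a)=\phi(x_a)$, and the two generator families of Proposition~\ref{prop:bend-rels-generate} translate into homogeneity and the min-attained-twice (ultrametric) condition; you unwind this in more detail than the paper does, and your identification of the point set with $\Vtrop(\ker\pi)^\perp$ spells out the duality the paper leaves implicit for the final sentence.

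Two small touch-ups are needed. First, in the case $w(a)=w(b)$ you still need $w(c)=w(a+b)\ge w(a)$ from the ultrametric inequality. Second, a degenerate triple with $a=b$ should be read as $\Vtrop(2x_a+x_c)=v(2)\odot x_a\oplus x_c$, whose bend relation is just homogeneity. The literal collapsed form $x_a\oplus x_c$ would wrongly force $w(a)=w(-2a)$ when $v(2)>0$.
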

\begin{proof}
A $\T$-valued point $w$ sends each vector $x_a$ to a tropical number $w(a)$.  The relations $x_{\lambda a} \sim v(\lambda)\odot x_a$ encodes the condition that $w$ is compatible with the valuation $v$ on $K$, and the relations $\bend(x_a\oplus x_b \oplus x_c )$ precisely encode the condition that $w$ satisfies the ultra-metric triangle inequality.
\end{proof}

We note that the valuated matroid whose associated tropical linear space is the space of semivaluations has ground set $V$, which is infinite in general. In \cite{bkkuv_buildings} it was shown, that this tropical linear space is the limit of all finite tropicalizations of $V$. This can be easily re-proven from our perspective of bend relations, as we now explain.

Consider a finite set $E$ and a surjective linear map $q: K^E \twoheadrightarrow V$.  We are thinking of these as spaces of linear functions of the dual geometric objects.  Taking linear duals, this corresponds to a variety (linear subspace) $Z=V^* \subset (K^E)^* \cong K^E$.  The tropicalization of this picture is the $\T$-module
\[
\Vtrop_q(V) \coloneqq \T^E / \Bb\big(\trop(\ker(q))\big),
\]
and the $\T$-linear dual of this gives us the tropical linear space 
$\Vtrop(Z) \subset \T^E$.  These tropicalizations are functorial in the sense that a diagram
\begin{center}
\begin{tikzcd}
    K^{E_1} \ar[rr] \ar[dr,"q_1" '] & & K^{E_2} \ar[dl, "q_2"] \\
    & V
\end{tikzcd}
\end{center}
in which the top arrow is induced by a map of sets $E_1 \to E_2$ induces a $\T$-linear map of tropicalizations $\Vtrop_{q_1}(V) \to \Vtrop_{q_2}(V)$.

\begin{theorem}
The colimit of all tropicalizations of finite presentations $K^E \to V$ is canonically isomorphic to the tropicalization of $\pi\co \widehat{V} \to V$.
\end{theorem}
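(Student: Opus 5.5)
The plan is to compare the two objects through their presentations by generators and relations in the category of $\T$-modules. The first step is to fix the indexing category: objects are finite sets $E$ with a surjection $q\co K^E \twoheadrightarrow V$, and morphisms are maps of sets $E_1\to E_2$ compatible with the $q_i$. Surjectivity means $q(e)\in V$ is a spanning family. This category is filtered up to the usual caveats: two presentations $q_1,q_2$ both map to $E_1\sqcup E_2$ with the induced map. Since the functor $E\mapsto \T^E/\Bb(\trop\ker q)$ is compatible with the maps, the colimit is a filtered colimit of $\T$-modules, so it can be computed as a colimit of presentations.

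The second step is to write each $\Vtrop_q(V)$ using Proposition~\ref{prop:bend-rels-generate} adapted to finite $E$. I would not try to identify generators of $\Bb(\trop\ker q)$ directly. Instead, I would construct the comparison maps in both directions.

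For the map $\Vtrop_q(V)\to \Vtrop_\pi(V)$, the assignment $e\mapsto q(e)$ gives a map of sets $E\to V$, hence $K^E\to\widehat V$ over $V$. This sends $\ker q$ into $\ker\pi$, and tropicalization is functorial for coordinate maps induced by set maps: a vector $\sum c_e x_e$ maps to $\sum c_e x_{q(e)}$, and the tropicalization of its image lies in $\trop\ker\pi$. So bend relations go to bend relations, provided collisions $q(e)=q(e')$ are handled. For this I expect to use that $\trop(\ker\pi)$ contains $x_a\oplus x_a'$-type identifications. These maps are compatible, so they induce $\Phi\co \operatorname{colim}\to\Vtrop_\pi(V)$.

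For the inverse, $\Vtrop_\pi(V)$ is generated by the $x_a$. I would send $x_a$ to the class of $e_a$ in $\Vtrop_{q'}(V)$, where $q'$ is any finite presentation whose index set contains an element mapping to $a$; such a presentation exists by adjoining $a$ to a basis. This is well defined in the colimit. To check that the relations of Proposition~\ref{prop:bend-rels-generate} hold, note that each relation involves only finitely many vectors $a,b,c,\lambda a$. Choose a finite presentation whose index set contains all of them. The vectors $x_a+x_b+x_c$ and $x_{\lambda a}-\lambda x_a$ lie in its kernel, so their tropicalizations lie in $\trop\ker q$, and the corresponding bend relations and scalar relations hold there. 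This is exactly why Proposition~\ref{prop:bend-rels-generate} is essential: it makes the relations of $\Vtrop_\pi(V)$ finitary.

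The two maps are inverse on generators: $\Psi\Phi(e)=e_{q(e)}$, which equals $e$ in the colimit via the map $E\sqcup\{a\}\leftarrow E$ after identifying $e$ with the new element. The main obstacle is that last identification, together with the collision issue above. I need to know that when two indices $e,e'$ of a finite presentation satisfy $q(e)=q(e')$, the relation $e\sim e'$ holds in $\Vtrop_q(V)$. The vector $x_e-x_{e'}$ lies in $\ker q$ and tropicalizes to $x_e\oplus x_{e'}$. Its bend relations, $x_e\oplus x_{e'}\sim x_e$ and $x_e\oplus x_{e'}\sim x_{e'}$, give $x_e\sim x_{e'}$, and this should settle it.
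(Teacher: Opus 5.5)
Your overall strategy matches the paper's proof. The comparison map is $x_e\mapsto x_{q(e)}$. Surjectivity holds because each $x_a$ is hit once $a$ is adjoined to some $E$. Injectivity holds because Proposition~\ref{prop:bend-rels-generate} makes the relations of $\Vtrop_\pi(V)$ finitary, so each generator already lives in a finite presentation containing $a,b,c,\lambda a$. Building an explicit inverse $\Psi$ is equivalent to the paper's check that the colimit of the congruences $\Bb(\trop\ker q)$ surjects onto $\Bb(\trop\ker\pi)$. Your relation $x_e\sim x_{e'}$ for $q(e)=q(e')$ is exactly what makes $\Psi$ well defined and gives $\Psi\Phi=\mathrm{id}$.

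The step that does not close as written is the well-definedness of $\Phi$ on $\Vtrop_q(V)$ when $q$ is not injective on $E$. The paper also takes this for granted in its functoriality remark. Your claim that ``bend relations go to bend relations'' is false at the level of polynomials. When coefficients cancel within a fibre, the image $F=\bigoplus_e v(c_e)\odot x_{q(e)}$ of $\trop f$ is not $G=\trop(\widehat q f)$. The relation $x_e\sim x_{e'}$ holds in the \emph{source}, so it says nothing about whether images of relations hold in the \emph{target}.

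The fix is short:
\begin{enumerate}
\item By the ultrametric inequality $F\le G$ coefficientwise, so $F=G\oplus H$, where $H$ is the part of $F$ at positions where cancellation occurred.
\item Deleting the term $e_0$ from $\trop f$ changes $F$ only if $v(c_{e_0})$ is the strict minimum over the fibre of $a_0=q(e_0)$.
\item In that case $a_0$ is not a cancellation position and $G_{a_0}=v(c_{e_0})$. The image of the truncated polynomial is then $G_{\widehat{a_0}}\oplus m'\odot x_{a_0}\oplus H$, where $m'>G_{a_0}$ is the next valuation in that fibre (or $\infty$).
\item The bend relation $G\sim G_{\widehat{a_0}}$ gives $G=G\oplus m'\odot x_{a_0}\sim G_{\widehat{a_0}}\oplus m'\odot x_{a_0}$. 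Adding $H$ to both sides gives the needed relation $F\sim G_{\widehat{a_0}}\oplus m'\odot x_{a_0}\oplus H$.
\end{enumerate}
The same argument, with $q_2$ in place of $\pi$, gives functoriality of the diagram along non-injective maps $E_1\to E_2$. You also assumed this when you spoke of the colimit.
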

\begin{proof}
Given a presentation $q\co K^E \to V$, there is an induced map $\widehat{q}\co K^E \to \widehat{V}$ sending $x_e \mapsto x_{q(e)}$ for each $e\in E$.  These maps are all compatible as $q$ varies, and so they induce a canonical map from the colimit to the big tropicalization $\Vtrop_\pi(V)$.  We will show it is an isomorphism.

It is clearly surjective since for any $a\in V$ and finite presentation $K^E \to V$, we have the diagram
\begin{center}
\begin{tikzcd}
    K^{E} \ar[rr] \ar[dr,"q" '] & & K^{E \cup \{a\}} \ar[dl, "q'"] \\
    & V
\end{tikzcd}
\end{center}
where $q'$ is $q$ extended by sending the basis vector $x_a$ to $a\in V$, and so the generator $x_a \in \bigoplus_{a\in V} \T$ is in the image.

The argument for injectivity is similar. It suffices to show that the map from the colimit of the system of congruences  $\Bb\big(\Vtrop(\ker(q))\big) \subset  \T^E \times \T^E$ (for $E$ finite) to $\Bb\big(\Vtrop(\ker(\pi))\big)$ is surjective.  Moreover, since the latter is generated by the relations given in Proposition \ref{prop:bend-rels-generate}, it suffices to check that $x_a\oplus x_b \oplus x_c$ is in the image of some $\Vtrop(\ker(q))$ whenever $a+b+c = 0$ in $V$, and likewise for $x_{\lambda a} \oplus v(\lambda)\odot x_a$.   Given a triple $a,b,c \in V$ with $a+b+c=0$, we can extend $\{a,b,c\}$ to a spanning set $E$ and consider the resulting presentation $q\co E \to V$.  We have $x_a + x_b + x_c \in \ker(q)$, and so the corresponding tropical sum $x_a \oplus x_b \oplus x_c \in \Bb\big(\Vtrop(\ker(\pi))\big)$ is in the image of $\Bb\big(\Vtrop(\ker(q))\big)$.  For generators of the second type, we use the same idea. Given $a\in V$ and $\lambda \in K$, choosing a spanning set $E \subset V$ containing $a$ and consider the resulting presentation $q\co K^E \to V$.  Then $x_{\lambda a} - \lambda x_a \in \ker(q)$ and so $x_{\lambda a} \oplus v(\lambda)\odot x_a \in \trop(\ker(q))$. 
\end{proof}

\subsection{The non-linear story: Berkovich analytification}

We now sketch some results of \cite{Giansiracusa2x2022}.

Let $K$ be a valued field with valuation $v \co K\to \T$, and $X=\Spec A$ an affine $K$-scheme, where $A$ is a $K$-algebra.
The usual definition of a \emph{multiplicative semivaluation} on $A$ is equivalent to the following: it is a map $w \co A\to \T$ such that $w(a)\oplus w(b) \oplus w(c)$ bends if $a+b+c=0 \in A$, and $w(ab)=w(a)\odot w(b)$.
We say that a multiplicative semivaluation is \emph{compatible} with the valuation on $K$ if $w(\lambda a)=v(\lambda) \odot w(a)$ for $\lambda\in K$ and $a\in A$.  Multiplicative semivaluations compatible with $v$ will simply be called $v$-semivaluations.

The advantage of defining semivaluations in the way above is that we immediately see how to extend from totally ordered idempotent semifields like $\T$ to arbitrary idempotent semirings.  This turns out to be a conceptually useful thing to do

\begin{definition}\label{def: Ber_ana} 
Given an affine $K$-scheme $X = \Spec A$, the underlying set of the \emph{Berkovich analytification} $X^{\an}$ of $X$ is the set of $v$-semivaluations on $A$.
\end{definition}

When $X$ is not affine, one can choose an open affine covering, construct the analytification of each affine patch, and then glue these together.

In Berkovich's theory \cite{Berkovich_book}, the underlying sets are just the starting point for constructing his category. They are equipped with a topology and a sheaf of analytic functions.  The topology can be described in terms of our tropical algebraic perspective, but it seems that the structure sheaf genuinely lives outside of the realm of tropical geometry.

\begin{Exa}
Due to their fuzzy nature, Berkovich spaces are difficult to visualize, especially in higher dimensions. In \autoref{fig:berkovich_line} we see a visualization of the Berkovich projective line $(\P^{1})^{\an}$ from \cite{baker_berkovich}.  

\begin{figure}
    \centering
    \includegraphics[width=0.5\linewidth]{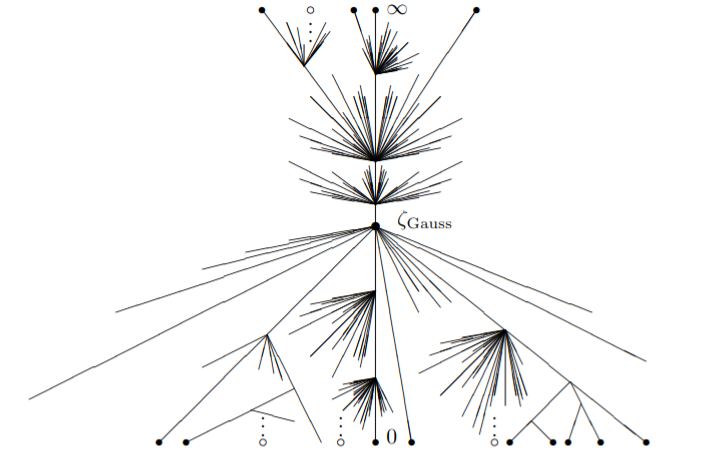}
    \caption{The Berkovich projective line $(\P^{1})^{\an}$, adapted from an illustration of Joe Silverman.}
    \label{fig:berkovich_line}
\end{figure}
\end{Exa}

The following theorem of Payne was an important early result in the development of tropical geometry.

\begin{theorem}[\cite{payne_analytification}]\label{thm:payne_analytification}
Let $X$ be a quasiprojective $K$-scheme.
\begin{enumerate}[label=\arabic*)]
    \item For any embedding of $X$ into a quasiprojective toric variety, there is a canonical surjection of $X^{\an}$ onto the corresponding tropicalization;
    \item\label{item2payne} The analytification is homeomorphic to the limit of all tropicalizations, i.e.\
\[
    X^{\an} \cong \lim_{\substack{\varphi\co  X\hookrightarrow Y \\ Y \text{ toric}}} \Vtrop_{\varphi}(X)\,.
\]
\end{enumerate}
\end{theorem}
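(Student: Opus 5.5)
Following the paper's approach to the linear case, I would first reduce to the affine case and treat $X=\Spec A$ directly, then glue. For affine $X$, a closed embedding into an affine toric variety (first think of $\mathbb{A}^n$ or a torus) is a surjection $\varphi^\#\co K[x_1,\dots,x_n]\twoheadrightarrow A$ sending $x_i\mapsto a_i$. By the Fundamental Theorem recalled in the first lecture, the tropicalization $\Vtrop_\varphi(X)$ is the set of $\T$-points of $\T[x_1,\dots,x_n]/\Bb(\Vtrop(\ker\varphi^\#))$.

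\textbf{Part 1.} Given a $v$-semivaluation $w$, the point $(w(a_1),\dots,w(a_n))$ lies in this set. For any $F=\sum c_\alpha x^\alpha\in\ker\varphi^\#$, the terms $c_\alpha a^\alpha$ sum to $0$ in $A$. Iterating the bend condition for $w$ (a sum of zero forces the minimum of the valuations to be attained twice) and using multiplicativity shows that $\min_\alpha\{v(c_\alpha)+\langle\alpha,w(a)\rangle\}$ is attained twice. So $w$ respects all bend relations. Surjectivity is the content of Payne's original argument: each point of the tropical variety lifts to a $K'$-point for some valued field extension $K'$, and composing with the valuation on $K'$ gives a semivaluation. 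I would cite this, since it is not tropical-algebraic.

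\textbf{Part 2.} I would mimic the linear theorem. Form the "universal presentation" $\pi\co K[x_a : a\in A]\to A$, $x_a\mapsto a$, and its tropicalization $\T[x_a]/\Bb(\Vtrop(\ker\pi))$. The analogue of Proposition~\ref{prop:ker-pi-spanning-set} says that $\ker\pi$ is generated as an ideal by $x_a+x_b+x_c$ (for $a+b+c=0$), $x_{ab}-x_ax_b$, $x_{\lambda a}-\lambda x_a$ and $x_1-1$. The analogue of Proposition~\ref{prop:bend-rels-generate} says that the bend congruence of $\Vtrop(\ker\pi)$ is generated by the corresponding bend relations and the monomial identifications. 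Granting these, the $\T$-points of the universal tropicalization are exactly the $v$-semivaluations, i.e.\ $X^{\an}$ as a set. Next, exactly as in the linear colimit theorem, the algebra is the colimit over finite presentations (finite $E\subset A$ generating $A$). Each generating relation involves only finitely many elements, so it is already present in $\Vtrop(\ker q)$ for a presentation $q$ whose generating set contains them. Since $\Hom(-,\T)$ converts colimits of algebras into limits of point sets, we get $X^{\an}\cong\lim_q \Vtrop_q(X)$ as sets. The finite presentations are cofinal among affine toric embeddings, because any such embedding is dominated by adding coordinates. The topology matches because both sides carry the coarsest topology making the evaluations $w\mapsto w(a)$ continuous.

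\textbf{Main obstacle.} I expect the hardest step to be the generation statement for $\Bb(\Vtrop(\ker\pi))$. Tropicalizing generators does not in general generate the tropical ideal (Example~\ref{Exa: trop_ideal}). The idea is that in the universal presentation every polynomial in $\ker\pi$ can be rewritten, using monomial relations, as a linear relation among the variables $x_a$, which reduces to the linear case. Beyond this, quasiprojective gluing requires checking that the affine identifications are compatible on overlaps, i.e.\ Payne's use of affine charts of toric varieties.
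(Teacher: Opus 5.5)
For affine $X$ and affine toric targets, your argument uses the same mechanism as the paper's sketch of Theorem~\ref{thm:limits}: a universal presentation $x_a\mapsto a$, generators of its kernel, and the observation that the bend relations of the tropicalized generators are exactly the $v$-semivaluation axioms. The difference is the universal object.

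The paper uses the monoid algebra $K[A]$ of the multiplicative monoid of $A$. There $x_{ab}=x_ax_b$ and $x_1=1$ are built in, and $\ev\colon K[A]\to A$ is \emph{initial} among all presentations $K[M]\twoheadrightarrow A$ via $m\mapsto x_{f(m)}$. This works for every monoid $M$, including groups (torus embeddings) and affine semigroups $\sigma^\vee\cap\Z^n$, so every tropicalization receives a canonical map from the universal one.

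Your polynomial ring on the set $A$ is not initial: compare $x_i\mapsto x_{bc}$ with $x_i\mapsto x_bx_c$. In exchange, the colimit over finite $E\subset A$ is a directed union. Since each $F\in\ker\pi$ involves finitely many variables, $\Bb(\Vtrop(\ker\pi))=\bigcup_E\Bb(\Vtrop(\ker q_E))$, and this step needs no generation lemma. After imposing $x_{ab}\sim x_ax_b$ and $x_1\sim 0$, your algebra is the paper's $\mathcal V$.

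There are three things to correct.

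(i) Your ``main obstacle'' is not needed for the set-level statement. A $\T$-point is a semivaluation because your listed elements lie in $\ker\pi$. Conversely, a semivaluation respects every bend relation of $\Vtrop(F)$, $F\in\ker\pi$, by exactly your Part~1 computation. The congruence-level generation follows from the same computation run in the quotient $Q$ by your generators. There $a\mapsto[x_a]$ is a $Q$-valued semivaluation, and the three-term relations give the $k$-term relations $\bigoplus_i w(b_i)=\bigoplus_{i\neq j}w(b_i)$ for $\sum_i b_i=0$ by induction; apply this to $b_\alpha=c_\alpha a^\alpha$. Your proposed reduction to the linear case is not literal: distinct monomials can collapse onto the same $x_b$ with cancellation in the merged coefficient.

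(ii) ``Dominated by adding coordinates'' fails for torus embeddings. A monomial map $K[y_1^{\pm1},\dots,y_n^{\pm1}]\to K[x_e : e\in E]$ must send each $y_i$ to a unit, hence to a constant. Either use $K[A]$, or zigzag through $X\hookrightarrow\mathbb{G}_m^n\times\mathbb{A}^E$. Its tropical ideal contains $y_i\oplus x_{a_i}$, so the torus coordinates of a compatible family are forced by the $\mathbb{A}^E$ ones. Together with the cone $X^{\an}\to\Vtrop_\varphi(X)$ from Part~1, this gives the bijection.

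(iii) ``Then glue'' is a genuine gap for the full statement. Embeddings of an affine open $U\subset X$ need not extend to $X$, and even for affine $X$ Payne's limit includes non-affine toric targets. What is needed is Payne's construction of quasiprojective toric embeddings of $X$ adapted to a chosen affine open: one where the open is the preimage of an invariant affine chart and finitely many given functions on it are restrictions of monomials. This is where quasiprojectivity enters. The paper does not supply this either; it restricts to the affine case and cites \cite{payne_analytification}.
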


We are interested in proving a scheme-theoretic version of this theorem. To keep the presentation simple, we will restrict our attention to the affine case. Our notation will differentiate between the set-theoretic tropicalization  $\Vtrop_\varphi(X)\subseteq \T^n$ and the scheme-theoretic tropicalization $\Strop_\varphi(X)$, that we introduce now: 

\begin{definition}[\cite{Giansiracusa2X_2016}]
    Let $X= \Spec A$ be an affine $K$-scheme. Let $M$ be an integral monoid with zero (i.e.\ an $\mathbb F_1$-algebra) and let $\varphi: K[M] \twoheadrightarrow A$ be a surjective $K$-algebra morphism. Then $\Strop_\varphi(X)$ is defined as follows: 
    \[
        \Strop_\varphi(X)\coloneqq \Spec \big( \T[M]/\bend \trop (\ker(\varphi)) \big)
    \]
\end{definition}

The scheme-theoretic enrichment of Payne's inverse limit theorem can be stated as follows.
\begin{theorem}\label{thm:limits}
Let $X=\Spec A$ be an affine $K$-scheme.
\begin{enumerate}

\item[1)] The category of closed embeddings $X\hookrightarrow Y=\Spec K[M]$, for $M$ a monoid of monomials with respect to which we can tropicalize, has an initial object $u\co X\hookrightarrow \widehat{X}$.

\item[2)] There is a canonical bijection $\Vtrop_u(X)=X^{\an}$.

\item[$2'$)] The set of $\T$-valued points of the $\T$-scheme $\Strop_u(X)$ is $X^{\an}$.

\item[$2''$)] For any $\T$-algebra $S$ there is a canonical bijection
\[\Strop_u(X)(S) \cong \{\text{$v$-semivaluations on $A$ with values in $S$} \}.\]

\item[$2'''$)]   There is a universal $v$-semivaluation on $A$; it takes
values in the $\T$-algebra $\mathcal{V}$ of regular functions on 
$\Strop_u(X)$, and it is universal in the sense that any $v$-semivaluation on
$A$ taking values in $S$ factors uniquely through a $\T$-algebra homomorphism
$\mathcal{V} \to S$.

\item[3)] There is a natural isomorphism of $\T$-schemes: 
\[
    \Strop_u(X) \cong \lim_{ X\xhookrightarrow{\varphi} \mathbb{A}^n} \Strop_{\varphi} (X)\, .
\]
\end{enumerate}
\end{theorem}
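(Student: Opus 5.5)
The plan is to mimic the linear story of the previous subsection, replacing the vector space $\widehat{V}$ by a monoid algebra built from the underlying set of $A$. Let $M_A$ be the multiplicative monoid $(A,\cdot)$, viewed as an $\mathbb F_1$-algebra with zero given by $0\in A$. Let $\widehat{X}=\Spec K[M_A]$ and let $u\co K[M_A]\twoheadrightarrow A$ send the basis element $x_a$ to $a$. For 1), consider any presentation $\varphi\co K[M]\twoheadrightarrow A$. The monoid map $M\to M_A$, $m\mapsto\varphi(m)$, induces the unique morphism of presentations $K[M]\to K[M_A]$ compatible with $\varphi$ and $u$. Uniqueness holds because a monoid morphism compatible with the projections to $A$ is forced on each monomial. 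The only subtlety is whether $M_A$ is an admissible monoid for tropicalization. Here I would work in the class of monoids with zero for which $\T[M]$ makes sense, and not insist on integrality.

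The key algebraic step, analogous to Propositions \ref{prop:ker-pi-spanning-set} and \ref{prop:bend-rels-generate}, is to describe $\ker(u)$ and the congruence $\bend\trop(\ker u)$.

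\begin{itemize}
\item First, $\ker(u)$ is spanned, as a $K$-vector space (indeed as an ideal), by:
\begin{itemize}
\item the elements $x_a+x_b+x_c$ with $a+b+c=0$ in $A$;
\item the elements $x_{\lambda a}-\lambda x_a$;
\item the multiplicativity is built into $M_A$ via $x_ax_b=x_{ab}$.
\end{itemize}
\item Second, $\bend\trop(\ker u)$ is generated by the bend relations $\bend(x_a\oplus x_b\oplus x_c)$ for $a+b+c=0$, together with $x_{\lambda a}\sim v(\lambda)\odot x_a$.
\end{itemize}

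To prove the second claim, take an arbitrary $f=\sum c_a x_a\in\ker u$. Rewrite it, using the scalar relations, so that each coefficient is absorbed into its monomial. Then decompose by induction on the support size into three-term relations $x_a+x_b+x_{-(a+b)}$. Finally, show that each bend relation of $\trop(f)$ follows from the three-term bend relations and the scalar relations. This is the step I expect to be the main obstacle. It is the nonlinear analogue of the elementary manipulation in Proposition \ref{prop:bend-rels-generate}, and one must check that the chain of three-term relations does not introduce cancellations that destroy the bend relations. I would handle it by induction on the number of terms, combining $f\sim f_{\widehat\beta}$ with the relation for $x_{a_1}\oplus x_{a_2}\oplus x_{-(a_1+a_2)}$, in the style of \cite[Prop.~3.4.3]{Giansiracusa2x2022}.

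With this generating set in hand, $2'')$ follows as in Corollary \ref{Cor: semi_val_trop_linear_space}. A $\T$-algebra map $\T[M_A]/\bend\trop(\ker u)\to S$ is a multiplicative map $w\co A\to S$. It satisfies $w(\lambda a)=v(\lambda)w(a)$, and the triples with $a+b+c=0$ bend in $S$, which is exactly the definition of a $v$-semivaluation with values in $S$. Statements $2)$ and $2')$ are then the case $S=\T$ together with Definition \ref{def: Ber_ana}. For $2''')$, take $S=\mathcal V$ and the identity map; Yoneda then gives the universal semivaluation $a\mapsto[x_a]$.

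For 3), I would argue exactly as in the colimit theorem for linear spaces, but now on the level of coordinate algebras, so that the colimit of algebras becomes a limit of $\Spec$'s. A presentation $K[x_1,\dots,x_n]\to A$ given by elements $a_1,\dots,a_n$ maps to $u$ via $x_i\mapsto x_{a_i}$. This induces a canonical map $\operatorname{colim}\T[x_1,\dots,x_n]/\bend\trop\ker\varphi\to\T[M_A]/\bend\trop\ker u$.

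\begin{itemize}
\item Surjectivity: adjoin $a$ to any presentation, so that each $x_a$ lies in the image.
\item Injectivity: it suffices to hit the generators found above. Given $a,b,c$ with $a+b+c=0$, or given the pair $(a,\lambda a)$, choose a presentation whose generators include these elements. The corresponding linear polynomial then lies in $\ker\varphi$, so its bend relations already hold at that finite stage.
\item Multiplicativity: the relations $x_ax_b=x_{ab}$ are realized in any presentation that contains $a$, $b$ and $ab$ among its generators, via the polynomial $x_ax_b-x_{ab}\in\ker\varphi$.
\end{itemize}

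Filteredness of the index category is clear: any two presentations map to their union.
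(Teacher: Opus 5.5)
Your proposal is correct and follows essentially the same route as the paper. It uses the universal presentation $K[A]\to A$, $x_a\mapsto a$, which receives $m\mapsto x_{\varphi(m)}$ from any presentation. It then uses the kernel generators $x_a+x_b+x_c$ and $x_{\lambda a}-\lambda x_a$, and the fact that their bend relations generate $\bend(\trop(\ker \ev))$; your induction on the number of terms does go through, even in an arbitrary $\T$-algebra. From this you get the identification of $S$-points with $S$-valued $v$-semivaluations, and for 3) the linear-case colimit argument supplemented by the relations $x_ax_b\sim x_{ab}$.

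The one inaccuracy is your closing claim that the index category is filtered, because parallel morphisms need not be coequalizable. For $A=K[s^{\pm 1}]$, take the presentations $(x_1,x_2,x_3)\mapsto(1,s,s^{-1})$ and $(y_1,y_2)\mapsto(s,s^{-1})$, and the two maps between them given by $x_1\mapsto y_1y_2$ and by $x_1\mapsto 1$ (with $x_2\mapsto y_1$, $x_3\mapsto y_2$ in both). Any further monomial map equalizing them would need $h(y_1)h(y_2)=1$ in a free commutative monoid, which forces $h(y_1)=1$, contradicting $y_1\mapsto s\neq 1$. Your argument never uses filteredness, though. A colimit of quotients is the quotient of the colimit by the congruence generated by the images, and that is all that ``hitting the generators'' requires.
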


The initial embedding $X \to \widehat{X}$ from \autoref{thm:limits} part (1) is given at the level of algebras by
\[
    \begin{tikzcd}[row sep=0pt,/tikz/column 1/.append style={anchor=base east},/tikz/column 2/.append style={anchor=base west}]
        A & \ar[l, twoheadrightarrow, "\ev" ']  K[A]\\
        a & \ar[l, mapsto]x_{a},
    \end{tikzcd}
\]
where we consider $A$ as a commutative monoid with respect to multiplication.
The map $K[A] \to A$ is called an \emph{evaluation} map because it takes a formal $K$-linear combination of elements of $A$ and evaluates it to an element of $A$ using the arithmetic operations of $A$ as a $K$-algebra.
The universal property of this being an initial object means that, for any commutative monoid $M$ and morphism $f\co K[M]\to A$, there is a unique morphism
\begin{center}
    \begin{tikzcd}
        K[M] \ar[r, "f"] \ar[d, "\exists !" ', dashed] & A \\
        K[A] \ar[ur] & 
    \end{tikzcd}
\end{center}
given by $m \mapsto x_{f(m)}$.
One can prove that the kernel of $\ev$ is generated by the following elements: 
\begin{align*}
    x_a+x_b+x_c & \quad \quad \text{for $a+b+c=0$ in $A$} \\
    x_{\lambda a} - \lambda x_a & \quad \quad  \text{for $a \in A$, $\lambda \in K$}.
\end{align*}

With more effort from the fact above it follows that the congruence $\Bb\big(\trop (\ker(\ev))\big)$ is generated by the bend relations of the tropicalization of the elements generating $\ker(\ev)$, i.e.\ by: 
\begin{align*}
    x_a \oplus x_b \oplus x_c \sim x_a \oplus  x_b \sim x_b \oplus x_c \sim x_a \oplus x_c & \quad \quad \text{for $a+b+c=0$ in $A$} \\
    x_{\lambda a} \sim \lambda x_a & \quad \quad  \text{for $a \in A$, $\lambda \in K$}.
\end{align*}
We can see that the relations above encode the axioms of a generalized $v$-semivaluation on $A$: the former relations encode the (generalized) ultrametric triangle inequality, and the latter encode the compatibility with the valuation on $K$. The algebra $\mathcal{V}$ of global functions on $\Strop_u(X)$ is given by
\[
    \mathcal V = \T[A]/\bend\big(\hspace{-2pt}\trop (\ker(\ev))\big)
\] 
and from here it is straighforward to see that the $v$-semivaluation
\[
    \begin{tikzcd}[column sep = 17pt]
        A \ar[r] & \mathcal V,
    \end{tikzcd}
\]
defined by sending an element $a$ to $x_a$, is universal among all the $v$-semivaluations on $A$.

\section{Third lecture}
    \label{sec:ThirdLecture}

In this lecture we will begin to explore what happens when we try to tropicalize objects defined by quotients of algebras of non-commutative polynomials.  This is mostly a report on work that was done by Beth Zhou and Luca Chadwick when they were undergraduates at Durham.

\subsection{Non-commutative tropicalization}

Recall that given a cancellative monoid $M$, the process of tropicalization associates to the monoid algebra $K[M]$ the tropical monoid algebra $\T[M]$. Given an ideal $I \subseteq K[M]$, valuating the coefficients yields a tropical ideal $\trop(I) \subset \T[M]$ (this requires that $M$ is cancellative). The quotient algebra $K[M]/I$ is associated on the tropical side with the quotient $\T[M]/\bend\big(\trop(I)\big)$.

Here is an important observation:  the monoid $M$ does not need to be a commutative!  In the extreme non-commutative case, we can consider the free associative monoid of words on $n$ letters $x_1, \dots , x_n$, where the product is given by concatenation of words and the unit is the empty word.  This free monoid is denoted $F_n$. The associated monoid algebra $K[F_n]$ is traditionally written as $K\langle x_1, \dots , x_n \rangle$; this is algebra of noncommutative polynomials over $K$.
Notice that 
\[
    K\langle x_1, \dots , x_n \rangle = \bigoplus_{d=0}^\infty (K^n)^{\otimes d};
\]
i.e., the noncommutative polynomial algebra is the same as the tensor algebra $T(K^n)$.

Given a $K$-algebra $A$, a choice of generating set is equivalent to a surjective homomorphism $\pi \co K \langle x_1, \dots , x_n \rangle \twoheadrightarrow A$.  The kernel of $\pi$ is a two-sided ideal of $K \langle x_1, \dots , x_n \rangle$, and its tropicalization $\trop(\ker(\pi))$ is a two-sided ideal of the tropical noncommutative polynomial algebra $\T \langle x_1, \dots , x_n \rangle $ and a tropical linear space --- i.e., it is a 2-sided tropical ideal. Thus, in this non-commutative setting we can define the tropicalization of $A$ with respect to $\pi$ as $\T \langle x_1, \dots , x_n \rangle  / \bend\big(\trop(\ker(\pi))\big)$, in complete analogy with what we did in the commutative setting. 

\begin{proposition}\label{prop:non-comm-trop}
    Fix a valued field $v \co K \rightarrow \T$. Then:
    \begin{enumerate}[label=(\arabic*)]
        \item The tropicalization of $K[x_1, \dots , x_n]$ with respect to the map 
        \[
            \begin{tikzcd}[column sep = 17pt]
                 K \langle x_1, \dots , x_n \rangle \ar[r, twoheadrightarrow] & K[x_1, \dots , x_n]
            \end{tikzcd}
        \] 
        is the usual tropical commutative polynomial algebra $\T[x_1, \dots , x_n]$; 
        \item The tropicalization of the algebra of $n\times n$ matrices,  
        $\textup{Mat}_{n \times n}(K)$, with respect to the presentation 
    \[
        \begin{tikzcd}[column sep = 17pt]
            K \langle x_{i,j} \mid i,j \in \{1, \dots , n\} \rangle \ar[r, twoheadrightarrow] & \textup{Mat}_{n \times n}(K)
        \end{tikzcd}
    \]
        given by
    \[
            x_{i,j} \mapsto 
                    \begin{matrix}
                    & j & \\
                        & \begin{pmatrix}
                            0 & \dots & 0\\
                         \vdots & 1 & \vdots \\
                            0 & \dots & 0
                        \end{pmatrix} & i 
                    \end{matrix}
    \]
    is the algebra of tropical matrices, $\textup{Mat}_{n \times n}(\T)$;
    
    \item\label{axiom3: prop:non-comm-trop}  The tropicalization of the exterior algebra $\bigwedge K^n$ with respect to the quotient map 
    \[
        \begin{tikzcd}[column sep = 17pt]
            K \langle x_1, \dots , x_n \rangle \ar[r, twoheadrightarrow] & \bigwedge K^n
        \end{tikzcd}
    \]
    is the tropical algebra $\bigwedge_{\trop} \T^n \coloneqq \T[x_1, \dots , x_n] / (x_i^2 \sim \infty)$ as proposed in \cite{Giansiracusa2x2018}.  Moreover, given a surjective linear map $\pi: K^n \twoheadrightarrow Q$, the tropicalization of the composition
    \[
        \begin{tikzcd}[column sep = 17pt]
            K \langle x_1, \dots , x_n \rangle \ar[r, twoheadrightarrow] & \bigwedge K^n \ar[r, twoheadrightarrow] & \bigwedge Q
        \end{tikzcd}
    \]
    is the push-out of the diagram
    \[
        \begin{tikzcd}
            \T^n \ar[r] \ar[d] & \bigwedge_{\trop} \T^n \\
            \T^n / \Bb\big(\Vtrop(\ker (\pi))\big).  & 
        \end{tikzcd}
    \]

    \end{enumerate}
    \end{proposition}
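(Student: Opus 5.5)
The common strategy for all three parts is to identify the kernel $I=\ker(\pi)$ degree by degree, tropicalize it, and then show that $\Bb(\trop(I))$ is generated by a small, explicit set of relations. In each case $I$ is homogeneous with respect to the word-length grading on $K\langle x_1,\dots,x_n\rangle$. It is spanned, as a $K$-vector space, by binomials or monomials of a very special shape: differences of two words, or single words. So the tropicalization is computed by checking which supports and coefficient patterns can occur in $I$. The key lemma I would prove first is the following. \emph{If a homogeneous linear subspace $L$ of a monoid algebra $K[F]$ is the kernel of a map to a space with a basis indexed by the fibres of a map of sets $F\to B$, possibly with some fibres sent to $0$ and with all nonzero images being $\pm$ basis vectors, then $\Bb(\trop L)$ is generated by $w\sim w'$ for words in the same fibre and $w\sim\infty$ for words sent to $0$.} To prove it, take $f\in L$. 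Grouping by fibres, each fibre's coefficient sum (with signs) vanishes. Therefore in $\trop(f)$ the minimum over each fibre is attained at least twice, unless the fibre maps to $0$. Once we impose the proposed relations, each bend relation $\trop(f)\sim\trop(f)_{\widehat\beta}$ becomes a tautology. Conversely, the proposed relations are themselves bend relations of tropicalized elements $w-w'$ and $w$ of $L$. This is the same elementary manipulation as in Proposition~\ref{prop:bend-rels-generate}.

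For part (1), the kernel of the abelianization map is spanned by $w-w'$ for words that are permutations of each other. The fibres are exactly the commutative monomials, so the quotient is $\T[F_n]/(w\sim w')=\T[x_1,\dots,x_n]$. For part (2), words $x_{i_1j_1}\cdots x_{i_kj_k}$ map either to an elementary matrix $E_{i_1j_k}$, when the indices chain, or to $0$. The empty word maps to the identity $\sum_i E_{ii}$, which is the one non-monomial image. I would handle this by working in positive degrees, where the lemma applies directly. The degree-$0$ relation $1=\sum_i x_{ii}$ is then handled separately: tropicalizing the kernel element $1-\sum_i x_{ii}$ gives its bend relations, which together with the product relations identify $0_{\text{word}}$ with $\bigoplus_i x_{ii}$. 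I would then check that the quotient is the free $\T$-module on the $x_{ij}$, with multiplication $x_{ij}x_{kl}=\delta_{jk}x_{il}$ (where $\delta_{jk}$ is $0$ or $\infty$ tropically) and unit $\bigoplus x_{ii}$. This is $\mathrm{Mat}_{n\times n}(\T)$.

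For part (3), $\ker$ of $K\langle x\rangle\to\bigwedge K^n$ is spanned by two kinds of elements. The first are words with a repeated letter. The second are $w-\mathrm{sgn}(\sigma)w^\sigma$ for words without repeats. The signs disappear under the valuation, so the lemma gives $\T[x_1,\dots,x_n]/(x_i^2\sim\infty)$. For the second statement I would compute the kernel of the composite. Writing $J=\ker\pi\subset K^n$, the composite kernel is the previous kernel plus the two-sided ideal generated by $J$. The point is to show that $\Bb$ of its tropicalization is generated by the previous relations together with the degree-one relations $\Bb(\trop J)$, pulled back along $\T^n\to\bigwedge_{\trop}\T^n$. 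That generation is precisely the presentation of the push-out. \textbf{This is the main obstacle}, because the kernel in degree $d$ is $J\wedge\bigwedge^{d-1}K^n$. Its tropicalization is governed by the valuated matroid of $Q$ and its Plücker vectors, not just by products of circuits with monomials. As a first attempt, I would show that the degree-$d$ quotient of the push-out has, as its $\T$-linear dual, the tropical linear space of $\bigwedge^d Q^*$. I would do this using the tropical wedge-power description of \cite{Giansiracusa2x2018} together with the fact that tropicalization commutes with exterior powers of realizable valuated matroids (the Plücker vector of $\bigwedge^d$). The two congruences then agree because both are saturated bend congruences of tropical linear spaces with the same dual.
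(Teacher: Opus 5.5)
The paper leaves this proposition as an exercise, so there is no argument to compare yours with. Your fibre lemma is correct and proves (1) and the first assertion of (3); there the empty word maps to the basis vector $1$, so the lemma applies in every degree. It would also prove (2) if the presentation were the non-unital free algebra on the $x_{ij}$. The two remaining steps fail, and in both cases the conclusion you are aiming at is false as stated, so neither gap can be closed.

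\textbf{The unit in (2).} For $n\ge 2$, the bend relations of $0\oplus D$, with $D=\bigoplus_i x_{ii}$, only give $1\oplus D\sim D\sim 1\oplus\bigoplus_{i\ne j}x_{ii}$. They never give $1\sim D$, and no other kernel element supplies it.

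To see this, take $f$ in the kernel, put $a=v(c_\emptyset)$, and let $m_{il}$ be the least valuation of a coefficient of a chain word from $i$ to $l$. The diagonal conditions force $m_{ii}\le a$. Modulo the positive-degree relations, the only non-trivial bend relations are then:
(i) deleting the constant term;
(ii) deleting the unique minimal word of a diagonal fibre with $m_{jj}=a$, which raises that one entry above $a$.

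Write $A\preceq B$ for $A\oplus B=B$. Take $n=2$ and $V=\T^3/\Bb(y_1\oplus y_2\oplus y_3)$. Let $p\colon V\to\T^2$ be induced by $y_1\mapsto e_1$, $y_2\mapsto e_2$, $y_3\mapsto e_1\oplus e_2$; this is well defined. Let $s\colon\T^2\to V$ send $e_i\mapsto y_i$, and set $\rho(x_{il})=sE_{il}p\in\mathrm{End}_{\T}(V)$.

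Since $ps=\mathrm{id}$, the matrix-unit relations hold exactly. Relation (i) follows from $\mathrm{id}\preceq sp$. Relation (ii) follows from $sE_{jj}p\preceq\mathrm{id}\oplus sE_{ii}p$ for $i\ne j$. Both are consequences of $y_1\oplus y_2\oplus y_3=y_i\oplus y_j$ in $V$. So $\rho$ kills $\Bb(\trop(\ker))$.

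However, $\rho(1)=\mathrm{id}\ne sp=\rho(D)$, because $y_3\neq y_1\oplus y_2$ in $V$: evaluate at the point $(0,0,5)$ of the tropical line. Hence the natural surjection from this tropicalization onto $\mathrm{Mat}_{2\times 2}(\T)$ is not injective.

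\textbf{The push-out in (3).} The fact you invoke, that tropicalization commutes with exterior powers, fails in intermediate degrees: $\trop(\bigwedge^d W)$ is not determined by $\trop(W)$ when $2\le d<\dim W$.

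Take the trivial valuation on an infinite field $K$, $n=6$, and let $Q^*=(\ker\pi)^\perp$ be the row space of a $3\times 6$ matrix with columns $a_1,\dots,a_6$ in general position. The matroid of $\bigwedge^2 Q^*\subseteq K^{15}$ is dual to the matroid of the vectors $a_i\wedge a_j\in\bigwedge^2K^3\cong (K^3)^*$. These vectors are the lines $\overline{a_ia_j}$ viewed as points of the dual plane. So this matroid records whether $\overline{a_1a_2},\overline{a_3a_4},\overline{a_5a_6}$ are concurrent. That can happen or not while the matroid of the $a_i$ stays $U_{3,6}$.

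By the same degree-wise reduction you use, the degree-$2$ part of the tropicalization of $\bigwedge Q$ is $\T^{\binom{6}{2}}/\Bb(\trop(\ker\pi\wedge K^6))$. Its $\T$-linear dual is the tropical linear space of $\bigwedge^2Q^*$, so it distinguishes the two configurations. The push-out, by contrast, depends only on $\trop(\ker\pi)$, which is the same for both.

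The push-out always surjects onto the tropicalization, since its relations $x_T\cdot\Bb(\trop(j))$ are bend relations of tropicalized kernel elements. But the two can coincide for at most one of these configurations. So your plan cannot be completed, and the second assertion of (3) needs an additional hypothesis.
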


\begin{proof}
We leave this as an exercise for the motivated reader.  The arguments are entirely elementary.
\end{proof}

\begin{remark}
    The degree $d$ part of $\bigwedge_{\trop} \T^n$ is a free $\T$-module of rank $\binom{n}{d}$. Notice that the square of an element $a \in \bigwedge_{\trop} \T^n$ need not to be equal to $\infty$. E.g., in two variables:
    \[
        (x_1\oplus x_2)^2 = x_2^2 \oplus  x_1^2 \oplus  x_1\odot x_2 \oplus  x_2\odot x_1 = x_1\odot x_2.
    \]
\end{remark}

\subsection{The classical Pl\"{u}cker embedding story} 
One of the many very useful things that exterior algebras do is provide a framework for describing the Pl\"{u}cker embedding of the Grassmannian 
$\mathit{Gr}(n,d)$ into projective space 
$\mathbb{P}\left( \bigwedge^d K^n \right)$.  We first recall how this works, before moving on to the tropical analogue of this story in the next section.

Given a $d$-dimensional subspace $Q \subseteq K^n$, we can decompose $K^n$ as $Q \oplus Q^\perp$ and by projecting we obtain a surjection $K^n \rightarrow Q$, thus a surjection 
\[
    \begin{tikzcd}[column sep = 17pt]
        {\bigwedge}^d K^n \ar[r, twoheadrightarrow] & {\bigwedge}^d Q.
    \end{tikzcd}
\]
As $Q$ has dimension $d$, the target is 1-dimensional and we obtain a point $p_Q \in \mathbb P (\bigwedge^d K^n)$. Here $\mathbb P (\bigwedge^d K^n)$ denotes to the space of $1$-dimensional quotients of $\bigwedge^d K^n$ (rather than 1-dimensional subspaces).

One can recover $Q$ from the point $p_Q$ as follows. Consider the map
\[
    \begin{tikzcd}[column sep = 17pt]
        - \wedge p_Q \co K^n = {\bigwedge}^1 K^n \ar[r] & {\bigwedge}^{d+1}K^n;
    \end{tikzcd}
\]
it can be shown that  $\ker(- \wedge p_Q) = Q^\perp$, and so $\ker(- \wedge p_Q)^\perp = Q$.

In general, given a vector $p \in \mathbb P (\bigwedge^d K^n)$, the following conditions are equivalent: 
\begin{enumerate}
    \item  There exists a subspace $Q$ such that $p = p_Q$;
    \item  $p$ satisfies the Plücker relations;
    \item $\textup{dim} \big(\ker(- \wedge p)\big) = n-d$;
    \item  $\textup{rank}(- \wedge p) = d$. 
\end{enumerate}

\subsection{The tropical Pl\"{u}cker embedding story}
Since $\bigwedge^d_{\trop} \T^n$ is a free module with basis given by the size $d$ subsets of $\{1, \ldots, n\}$, one can think of valuated matroids (in their tropical Pl\"{u}cker vector formulation) of rank $d$ as living here. 

Given a valuated matroid $p$, the corresponding  tropical linear space $L_p \subseteq \T^n$ is
\begin{align*}
    L_p  :&= \textup{span}(\{\text{valuated cocircuit vectors of $p$}\})  \\
    & = \{\text{valuated circuit vectors of $p$}\}^\perp
\end{align*}
i.e., the valuated circuits of $p$ are the linear equations defining $L_p$; each valuated circuit in fact is a linear form $\T^n \rightarrow \T$, and points of $L_p$ are those points $x \in \T^n$ at which each of the linear forms above tropically vanishes.  In the classical story, subspaces and quotients are interchageable thanks to orthogonal projection.  In the tropical world, subspaces and quotients have somewhat different characters.  

In other words, using a notation recalling that of the classical case, define
\[
Q_p \coloneqq \T^n/ \bend(\{\text{circuits of $p$}\}) = \T^n/ \bend(L_p^\perp),
\]
where $L_p^\perp$ is the tropical linear space dual to $L_p$. Note that $L_p= Q_p^\vee$. 
 
\begin{conjecture}
    $L_p^\vee = Q_p$.
\end{conjecture}

    Notice that the definition of $Q_p$ makes sense even for vectors $p \in \bigwedge^d_{\trop} \T^n$ that do not satisfy the tropical Plücker relations since we can still define the circuit vectors via the recipe
    \[
    \bigoplus_i p_{A \smallsetminus \{i\}} \odot x_i
    \]
    for $A \subset [n]$ of size $d+1$.

Proposition \ref{prop:non-comm-trop} part \ref{axiom3: prop:non-comm-trop} tells us how tropicalize the exteior algebra of a quotient $K^n \twoheadrightarrow Q$ by giving a recipe in terms of the tropicalization of $Q$.  This strongly suggests a natural generalization of tropical exterior algebra to non-realizable quotient modules.

\begin{definition}
    Given a surjective $\T$-module homomorphism $\T^n \twoheadrightarrow Q$, we define the tropical exterior algebra $\bigwedge_{\trop} Q$ as the push-out of the following diagram of $\T$-modules: 
    \begin{center}
        \begin{tikzcd}
            \T^n \ar[r] \ar[d] & Q. \\
            \bigwedge_{\trop} \T^n & 
        \end{tikzcd}
    \end{center}
\end{definition}

We are now ready to state the main theorem of \cite{Giansiracusa2x2018}, which gives a tropical analogue of the classical Pl\"{u}cker embedding story in terms of tropical exterior algebra.

\begin{theorem}\label{theorem:tropical-plucker}
Let $p \in \bigwedge^d_{\trop} \T^n$. 
    \begin{enumerate}[label=(\arabic*)]
        
        \item  The vector $p$ satisfies the tropical Plücker relations if and only if $\bigwedge_{\trop}^d Q_p$ is a free $\T$-module of rank $1$.
        
        \item \label{point:circuits} If $p$ satisfies the tropical Plücker relations, then the components of the map 
        \[
            \begin{tikzcd}[column sep = 17pt]
                - \wedge p \co  \bigwedge_{\trop}^1 \T^n \ar[r] & \bigwedge_{\trop}^{d+1} \T^n
            \end{tikzcd}
        \]
        are linear forms on $\T^n$ that are equal to the circuits of $p$.
        
        \item If $p$ satisfies the tropical Plücker relations, then the associated tropical linear space $L_p$ is equal to the set
        \[
            \{x \in \T^n \mid \text{each component of $- \wedge p$ tropically vanishes at $x$}\}.
        \]
        
        \item The isomorphism $\bigwedge_{\trop}^d\T^n \cong \bigwedge_{\trop}^{n-d}\T^n$ implements duality of (valuated) matroids.
        
    \end{enumerate}
\end{theorem}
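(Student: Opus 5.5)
The plan is to reduce everything to explicit computations in the monomial basis of $\bigwedge_{\trop}\T^n = \T[x_1,\dots,x_n]/(x_i^2\sim\infty)$. The degree $k$ part is free with basis $x_A$ for $A\in\binom{[n]}{k}$. Commutativity means there are no signs, and $x_A\odot x_B=\infty$ whenever $A\cap B\neq\emptyset$.

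I would do parts (2), (3), (4) first, since they are direct computations.

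For (2), write $p=\bigoplus_{|A|=d}p_A\odot x_A$. Then
\[
x_j\wedge p=\bigoplus_{A\not\ni j}p_A\odot x_{A\cup\{j\}}.
\]
For a fixed $(d+1)$-set $B$, the coefficient of $x_B$ in $\bigl(\bigoplus_j y_j x_j\bigr)\wedge p$ is $\bigoplus_{j\in B}p_{B\smallsetminus\{j\}}\odot y_j$. This is exactly the valuated circuit vector $\beta_B$.

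Part (3) then follows from (2) together with the description $L_p=\{\text{valuated circuits}\}^\perp$ recalled before the theorem.

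For (4), I would use the $\T$-linear isomorphism $x_A\mapsto x_{[n]\smallsetminus A}$. It sends $p$ to $p^*$ with $p^*_{[n]\smallsetminus A}=p_A$. Substituting complements into the index pairs $(B,S)$ shows that the tropical Plücker relations for $p$ and for $p^*$ coincide. Moreover, the circuits of $p^*$ are the cocircuits of $p$, so this isomorphism realizes valuated matroid duality.

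The substantial part is (1). By the push-out definition, $\bigwedge_{\trop}Q_p$ is the quotient of $\bigwedge_{\trop}\T^n$ by the congruence generated by the bend relations of the circuits $\beta_B$. The degree $d$ part is therefore $\T^{\binom{[n]}{d}}$ modulo the bend relations of the products $\beta_B\odot x_S$ for $|S|=d-1$. Computing,
\[
\beta_B\odot x_S=\bigoplus_{i\in B\smallsetminus S}p_{B\smallsetminus\{i\}}\odot x_{S\cup\{i\}}.
\]
A $\T$-linear functional $q\colon \bigwedge^d_{\trop}Q_p\to\T$ is then a vector $q$ such that $\bigoplus_{i\in B\smallsetminus S}p_{B\smallsetminus i}\odot q_{S\cup i}$ tropically vanishes for all pairs $(B,S)$. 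Taking $q=p$, these conditions are precisely the tropical Plücker relations. So $p$ satisfies Plücker if and only if $p$ gives a well-defined functional on $\bigwedge^d_{\trop}Q_p$.

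For the direction ($\Leftarrow$) I would argue as follows. A free rank-one quotient has a nonzero functional $\phi$, and every functional is a scalar multiple of it. I must then show that $\phi$ is proportional to $p$, so that $p$ satisfies Plücker. My intended argument is to show that the relations force $x_A\sim\infty$ exactly on non-bases of $p$, and force ratios consistent with $p$ along basis exchanges.

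For the direction ($\Rightarrow$), assume $p$ is a valuated matroid and fix a basis $A_0$. The strategy has three steps.
\begin{enumerate}
\item For a non-basis $A$, choose $B\supset A$ and $S\subset A$ so that $x_A$ is the unique surviving term in one bend relation. This gives $x_A\sim\infty$.
\item For bases $A$ and $A'=A-a+b$, use the three-term relation coming from $B=A\cup b$ and $S=A\smallsetminus a$. Combined with the Plücker inequalities, this gives $x_A\odot p_{A'}\sim x_{A'}\odot p_A$ (normalizing the scalars appropriately).
\item Connectivity of the basis exchange graph then gives $x_A\sim(p_A-p_{A_0})\odot x_{A_0}$ for every basis $A$. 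Hence the module is generated by the single element $x_{A_0}$, and evaluation against $p$ shows this generator is free.
\end{enumerate}

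The main obstacle is step (2). Bend relations only identify the minimum with the minima over subsets, so they do not directly yield a binomial relation like $x_A\odot p_{A'}\sim x_{A'}\odot p_A$. I would try to obtain the binomial relation by choosing $(B,S)$ so that only two terms can be non-$\infty$ after applying step (1). If that fails, I would instead induct on $|A\triangle A_0|$, using longer circuits together with the valuated exchange axiom (VM) to eliminate the extra terms.
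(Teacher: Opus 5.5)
The paper does not prove this theorem; it quotes it from \cite{Giansiracusa2x2018}. So this review judges your argument on its own merits.

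\textbf{What works.} Parts (2)--(4) are fine. So is your reduction in (1): the degree-$d$ relations are the bend relations of the forms $\beta_B\odot x_S$, and ``$p$ defines a functional on $\bigwedge^d_{\trop}Q_p$'' is exactly the tropical Pl\"ucker relations.

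\textbf{Your step (2) is not an obstacle.} When $S\subset B$ one has $|B\smallsetminus S|=2$. So for $B=A\cup b$, $S=A\smallsetminus a$, the form $\beta_B\odot x_S=p_{A'}\odot x_A\oplus p_A\odot x_{A'}$ is a binomial, not a three-term relation. Its two bend relations give $p_{A'}\odot x_A\sim p_A\odot x_{A'}$ by transitivity, with no Pl\"ucker input.

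\textbf{Gap in step (1) of ($\Rightarrow$).} Your choice $B\supset A$, $S\subset A$ always produces that same binomial, which reduces to $p_{A-a+b}\odot x_A$. This isolates $x_A$ only when $A$ is one exchange away from a basis. For example, take $n=4$, $d=2$, $p_{12}=0$ and all other coordinates $\infty$; for $A=\{3,4\}$ every such choice gives $\infty$. The fix is to take $B\not\supset A$:
\begin{itemize}
\item choose a basis $C$ minimizing $|A\smallsetminus C|$, and pick $a\in A\smallsetminus C$;
\item set $B=C\cup a$ and $S=A\smallsetminus a$;
\item then $\beta_B\odot x_S=p_C\odot x_A\oplus\bigoplus_{j\in C\smallsetminus A}p_{C-j+a}\odot x_{A-a+j}$.
\end{itemize}
The extra terms either vanish or involve non-bases strictly closer to $C$, so you can induct on $|A\smallsetminus C|$. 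This step uses only $p_C\neq\infty$, not Pl\"ucker, so it also works in ($\Leftarrow$).

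\textbf{The direction ($\Leftarrow$) is genuinely missing.} ``Ratios consistent with $p$ along basis exchanges'' presupposes that the support $\{A: p_A\neq\infty\}$ is connected under single exchanges. That is exactly what can fail when $p$ is not a Pl\"ucker vector. The binomials only show that $\phi$ is a multiple of $p$, or identically $\infty$, on each exchange component separately, so $\phi\propto p$ does not follow.

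You need a separate argument that disconnected support kills the module. For a component $K$, choose $A_1\in K$ and $A_2$ in the support but outside $K$ with $|A_1\smallsetminus A_2|$ minimal. Take $a\in A_1\smallsetminus A_2$, $B=A_2\cup a$, $S=A_1\smallsetminus a$. Minimality forces $p_{A_2-i+a}=\infty$ for all $i\in A_2\smallsetminus A_1$. Hence $\beta_B\odot x_S=p_{A_2}\odot x_{A_1}$, so $x_{A_1}\sim\infty$, and the binomials then kill all of $K$.

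With connected support, the binomials and the induction above give $x_A\sim p_A\odot y$ for a single generator $y$. Each bend relation of $\beta_B\odot x_S$ then becomes $\mu\odot y\sim\mu_i\odot y$. Here $\mu$ is the Pl\"ucker minimum for $(B,S)$, and $\mu_i$ is that minimum with the $i$th term omitted. So the module is $\T$ precisely when every Pl\"ucker minimum is attained twice. Otherwise a relation $\mu\odot y\sim\mu'\odot y$ with $\mu<\mu'$ collapses it to $0$ or to a two-element module. This settles both directions of (1) at once, for $p\neq\infty$.
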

\begin{remark}
    Notice that in general one can define the circuits of any vector $p \in \bigwedge^d_{\trop} \T^n$ as the components of the map as in part \ref{point:circuits} of \autoref{theorem:tropical-plucker} above, even when $p$ does not satisfy the tropical Plücker relations.
\end{remark}

\subsection{Clifford algebras}
Let $V$ be a vector space over $K$, and $q \co V \rightarrow K$ a quadratic form. 
\begin{definition}
    The Clifford algebra $Cl(V,q)$ is the $\Z/2\Z$-graded algebra 
    \[
        Cl(V,q) \coloneqq T(V)/\big(u \otimes u = q(u)\big),
    \]
    where $T(V)$ denotes the tensor algebra on $V$.
    Over $\C$ all non-degenerate quadratic forms are equivalent, so for $V = \C^n$ and $q = \textup{I}_n \in \textup{Mat}_{n \times n }(\C)$, we denote $Cl(\C^n, q)$ as $C_n$.
\end{definition}

By explicit computations one can easily show the following results.

\begin{proposition} The following isomorphisms hold:
    \begin{enumerate}[label=(\arabic*)]
        \item $C_0 \cong \C$;
        \item $C_2 \cong \textup{Mat}_{n \times n}(\C)$;
        \item $C_m \otimes C_n \cong C_{n+m}$; 
        \item \label{point:periodicity} $C_n$ is Morita equivalent to $C_{n+2}$ for every $n \in \N$.
    \end{enumerate}
\end{proposition}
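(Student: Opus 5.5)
The plan is to work with the standard presentation of $C_n$ and compare dimensions. Write $e_1,\dots,e_n$ for the standard basis of $\C^n$. Polarizing the defining relation $u\otimes u=q(u)$, by expanding $(u+v)^2=q(u+v)$, gives $uv+vu=2B(u,v)$. Hence $C_n$ is the algebra generated by $e_1,\dots,e_n$ subject to
\[
e_i^2=1 \quad\text{and}\quad e_ie_j=-e_je_i \ (i\neq j).
\]
Using these relations, every word can be reordered into an ordered monomial $e_S=e_{s_1}\cdots e_{s_k}$ with $s_1<\dots<s_k$, so $\dim C_n\le 2^n$.

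\textbf{Step 1: the lower bound $\dim C_n = 2^n$.} I expect this to be the main technical point, and I will do it first. The idea is to let $C_n$ act on $\bigwedge \C^n$ by
\[
e_i\mapsto x_i\wedge(-)+\iota_i,
\]
where $\iota_i$ is contraction against the $i$-th dual basis vector. A direct check shows that these operators square to $1$ and anticommute, so the action is well defined. Applying $e_S$ to $1$ gives $x_S$ plus terms of lower degree, so the $e_S$ act linearly independently. Hence the ordered monomials form a basis and $\dim C_n=2^n$.

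\textbf{Steps 2 and 3: parts (1) and (2).} Part (1) is immediate, since $T(0)=\C$. For part (2) (the intended statement is $C_2\cong\textup{Mat}_{2\times2}(\C)$), send
\[
e_1\mapsto\begin{pmatrix}1&0\\0&-1\end{pmatrix},\qquad e_2\mapsto\begin{pmatrix}0&1\\1&0\end{pmatrix}.
\]
These matrices square to the identity and anticommute, so the map is defined. Its image contains $1$, $e_1$, $e_2$ and $e_1e_2$, which span all of $\textup{Mat}_{2\times2}(\C)$. Since both sides have dimension $4$, the map is an isomorphism.

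\textbf{Step 4: part (3).} As literally stated with the ungraded tensor product, part (3) is false: $C_1\otimes C_1$ is commutative, while $C_2$ is not. So I would prove it for the $\Z/2\Z$-graded tensor product $\widehat\otimes$. Send $e_i\mapsto e_i\widehat\otimes 1$ and $e_{m+j}\mapsto 1\widehat\otimes e_j$. The Koszul sign makes the images anticommute and square to $1$, so the universal property gives a map $C_{m+n}\to C_m\widehat\otimes C_n$. It is surjective because its image contains all generators, and both sides have dimension $2^{m+n}$ by Step 1, so it is an isomorphism.

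\textbf{Step 5: part (4).} I will prove an ungraded isomorphism $C_{n+2}\cong C_n\otimes C_2$. Let $f_1,f_2$ be the generators of $C_2$ and set $\omega=\sqrt{-1}\,f_1f_2$. Then $\omega^2=1$, and $\omega$ anticommutes with $f_1$ and $f_2$. Define
\[
e_i\mapsto e_i\otimes\omega \ (i\le n),\qquad e_{n+1}\mapsto 1\otimes f_1,\qquad e_{n+2}\mapsto 1\otimes f_2.
\]
These images square to $1$ and pairwise anticommute, so the map is defined. The image contains $1\otimes\omega$, and therefore also $e_i\otimes 1$ for each $i\le n$; hence the map is surjective. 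Both sides have dimension $2^{n+2}$, so it is an isomorphism. Combining with part (2),
\[
C_{n+2}\cong C_n\otimes\textup{Mat}_{2\times2}(\C)\cong\textup{Mat}_{2\times2}(C_n).
\]
A matrix algebra over $R$ is Morita equivalent to $R$, via the progenerator $R^2$. This gives the claimed $2$-periodicity.
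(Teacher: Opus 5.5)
Your proposal is correct and supplies exactly the ``explicit computations'' the paper invokes without giving any further argument. The anticommutation presentation, the exterior-algebra model giving $\dim C_n = 2^n$, the Pauli-matrix isomorphism, and $C_{n+2}\cong C_n\otimes\textup{Mat}_{2\times 2}(\C)$ via $\omega=\sqrt{-1}\,f_1f_2$ all check out, and you are right that item (2) must read $\textup{Mat}_{2\times 2}(\C)$ and that item (3) holds only for the $\Z/2\Z$-graded tensor product (consistent with the paper calling $Cl(V,q)$ a $\Z/2\Z$-graded algebra), since $C_1\otimes C_1$ is commutative while $C_2$ is not.
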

\begin{remark}
    Bott periodicity is a deep and foundational theorem in homotopy theory. It says that the space $\mathbb{Z}\times BU$ representing complex $K$-theory is a 2-\emph{periodic} infinite loop space.  This means that if we apply the based loop space functor to it twice then we get the original space back.  One of the classic proofs of Bott Periodicity \cite{Bott-periodicity} begins with the periodicity of complex Clifford algebras.
\end{remark}
\begin{definition}
    Let $C_n^{\trop}$ be the tropicalization of the presentation $T(\C^n) \twoheadrightarrow C_n$, for every $n \in \N$. 
\end{definition}
\begin{proposition}
For tropicalized Clifford algebras, we have:
    \begin{enumerate}
        \item $C_m^{\trop} \otimes C_n^{\trop} \cong C_{m+n}^{\trop}$.
        \item $C_2^{\trop}$ is not isomorphic to $\textup{Mat}_{2 \times 2}(\T)$. The following isomorphism instead holds: 
        \[
            C_2^{\trop} \cong \T[x,y]/\{x^2 =  0, y^2 = 0\}
        \]
        i.e., $C_2^{\trop}$ is isomorphic to the tropicalization of $\textup{Mat}_{2 \times 2}(K)$ with respect to a different presentation. 
    \end{enumerate}
\end{proposition}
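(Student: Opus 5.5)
The plan is to work with the presentation $\pi_n\co \C\langle x_1,\dots,x_n\rangle \twoheadrightarrow C_n$, $x_i\mapsto e_i$. First I will identify $\ker(\pi_n)$ explicitly and compute its tropicalization degree by degree. Classically, $C_n$ has the PBW-type basis $e_S=e_{i_1}\cdots e_{i_k}$ for increasing sequences $S=\{i_1<\dots<i_k\}$. Every word $w$ in the $x_i$ maps to $\pm e_{S(w)}$, where $S(w)$ is the set of letters occurring an odd number of times and the sign is determined by the reordering. Since these basis vectors are linearly independent, $\ker(\pi_n)$ is a direct sum over $S$ (and over the parity/filtration pieces) of the kernels of the maps $\bigoplus_{w:\,S(w)=S}\C w\to \C e_S$, $w\mapsto \epsilon(w)$ with $\epsilon(w)=\pm1$. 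Each such kernel is the hyperplane $\{\sum c_w w : \sum \epsilon(w)c_w=0\}$.

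The tropicalization of such a hyperplane is the tropical linear space of a uniform corank-one valuated matroid with all Plücker coordinates $0$. Here I use that $\C$ carries the trivial valuation, so every sign $\epsilon(w)$ has valuation $0$. Its bend congruence is therefore generated by the relations $w\sim w'$ for all words with $S(w)=S(w')$; these are exactly the bend relations of the binomials $w\oplus w'$. I will justify the generation claim by the elementary manipulation used in Proposition~\ref{prop:bend-rels-generate}: the circuits of a hyperplane with all coefficients of valuation $0$ are the two-term forms $w\oplus w'$. As a result, $C_n^{\trop}$ is the monoid algebra $\T[\mathcal{P}([n])]$, where $\mathcal{P}([n])$ carries the symmetric-difference monoid structure, i.e.\ $\T[(\Z/2)^n]$. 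Equivalently,
\[
C_n^{\trop}\cong \T[x_1,\dots,x_n]/\langle x_i^2\sim 0,\ x_ix_j\sim x_jx_i\rangle .
\]
The main obstacle is this step: making sure that tropicalizing the whole kernel, and not just the generators $x_i^2-1$ and $x_ix_j+x_jx_i$, produces no further relations. The check uses the fact that every element of $\trop(\ker\pi_n)$ is supported within fibers of $w\mapsto S(w)$, where the minimum is attained at least twice.

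With this description in hand, part (1) follows because $\T[(\Z/2)^m]\otimes_\T\T[(\Z/2)^n]\cong\T[(\Z/2)^{m+n}]$, as monoid algebras take products of monoids to tensor products. For part (2), setting $n=2$ with $x=x_1$ and $y=x_2$ gives $C_2^{\trop}\cong\T[x,y]/\{x^2=0,y^2=0\}$. This algebra is commutative, whereas $\textup{Mat}_{2\times2}(\T)$ is not: the elementary matrices $E_{12}$ and $E_{21}$ fail to commute. Hence the two algebras are not isomorphic.

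Finally, I will show that this algebra is the tropicalization of $\textup{Mat}_{2\times2}(\C)\cong C_2$ with respect to the alternative presentation $x\mapsto\begin{pmatrix}0&1\\1&0\end{pmatrix}$, $y\mapsto\begin{pmatrix}1&0\\0&-1\end{pmatrix}$. These images satisfy the Clifford relations, so this presentation is the composite of $\pi_2$ with the classical isomorphism $C_2\cong\textup{Mat}_{2\times 2}(\C)$. Its tropicalization is therefore $C_2^{\trop}$ by the computation above.
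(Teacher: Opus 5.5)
The paper gives no proof of this proposition; it only says, just before the classical version, that such facts follow ``by explicit computations''. So there is nothing to match against. Your proposal is correct and complete, and it takes a more structural route than case-by-case manipulation of presentations would.

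You compute all of $\ker(\pi_n)$ through the fibres of $w\mapsto S(w)$. This shows that $\trop(\ker\pi_n)$ consists of those $F$ whose restriction to each fibre is either $\infty$ or attains its minimum at least twice. The bend congruence is then generated by $w\sim w'$ for $S(w)=S(w')$. Indeed, modulo these relations $F\equiv\bigoplus_S(\min F|_S)\odot w_S$, and deleting a single term cannot change a minimum that is attained twice.

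This gives $C_n^{\trop}\cong\T[(\Z/2)^n]$ for every $n$, and both parts then drop out:
\begin{itemize}
\item Part (1) is the formal identity $\T[M]\otimes_\T\T[N]\cong\T[M\times N]$.
\item The non-isomorphism in part (2) is just commutativity versus $E_{12}E_{21}\neq E_{21}E_{12}$ in $\textup{Mat}_{2\times 2}(\T)$. The ``different presentation'' clause follows from your Pauli-matrix presentation, which is $\pi_2$ composed with the isomorphism $C_2\cong\textup{Mat}_{2\times2}(\C)$, so it has the same kernel.
\end{itemize}
Your description also explains why the graded versus ungraded tensor product, which matters in the classical statement, makes no difference tropically: the signs have valuation $0$.

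A few points to tidy up:
\begin{itemize}
\item The parenthetical ``and over the parity/filtration pieces'' is wrong as written. A fibre mixes word lengths (for example $x_1^2-1\in\ker\pi_n$), so the decomposition is only over $S$, and parity is automatic.
\item ``Every element of $\trop(\ker\pi_n)$ is supported within fibers'' should read ``its restriction to each fibre attains its minimum at least twice''.
\item You do not need the valuation on $\C$ to be trivial. Every structure constant is $\pm1$, so the $\T$-span of the tropicalized hyperplanes is the same for any valuation.
\item The appeal to Proposition~\ref{prop:bend-rels-generate} is unnecessary; the direct collapse argument above is the actual proof.
\end{itemize}
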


\begin{question}
Is $C_{n+2}^{\trop}$ Morita equivalent to $C_n^{\trop}$?
\end{question}

A motivation for the above question comes from a notorious conjecture of MacPherson.  He asked if the space of oriented matroids is homotopy equivalent to the real Grassmannian.  A somewhat weaker question is:

\begin{question}
Do the spaces of oriented matroids exhibit a form of Bott periodicity? 
\end{question}


\begin{small}
 \bibliographystyle{alpha}
 \bibliography{ref}
\end{small}

\end{document}